\documentclass[english]{article}
\usepackage[T1]{fontenc}
\usepackage[utf8]{inputenc}
\usepackage{lmodern}
\usepackage[a4paper]{geometry}
\usepackage{babel}

\usepackage{mathtools}
\usepackage{stmaryrd}
\usepackage{amsmath, amssymb, amsthm}

\usepackage{nicematrix} 

\usepackage{csquotes}
\usepackage[noblocks]{authblk}

\usepackage[dvipsnames]{xcolor}
\usepackage{soul}
\newcommand{\com}[1]{\textcolor{red}{[\hl{#1}]}}
\renewcommand{\com}[1]{}

\usepackage{tikz-cd}
\usepackage{biblatex}
\usepackage{enumitem}

\newtheorem{theorem}{Theorem}[section]

\newtheorem{proposition}[theorem]{Proposition}
\newtheorem{corollary}[theorem]{Corollary}
\theoremstyle{definition}
\newtheorem{definition}[theorem]{Definition}

\newtheorem{lemma}[theorem]{Lemma}
\theoremstyle{remark}
\newtheorem*{remark}{Remark}

\newcommand{\setR}{\mathbb{R}}
\newcommand{\setZ}{\mathbb{Z}}
\newcommand{\setC}{\mathbb{C}}
\newcommand{\setK}{\mathbb{K}}
\newcommand{\setH}{\mathbb{H}}
\newcommand{\setN}{\mathbb{N}}

\DeclareMathOperator{\ad}{ad}
\DeclareMathOperator{\Ad}{Ad}

\DeclareMathOperator{\End}{End}

\DeclareMathOperator{\GL}{GL}
\DeclareMathOperator{\SL}{SL}

\DeclareMathOperator{\Hom}{Hom}

\newcommand{\hMod}{\text{-}\mathrm{Mod}}

\DeclareMathOperator{\Vect}{Vect}
\newcommand{\hVect}{\text{-}\mathrm{Vect}}

\DeclareMathOperator{\id}{id}
\DeclareMathOperator{\Id}{Id}

\DeclareMathOperator{\SO}{SO}

\renewcommand{\O}{\operatorname{O}}
\newcommand{\SOp}{\SO^{+}}

\renewcommand{\so}{\operatorname{\mathfrak{so}}} 

\DeclareMathOperator{\UU}{U}
\DeclareMathOperator{\SU}{SU}
\newcommand{\su}{\mathfrak{su}}
\newcommand{\symp}{\mathfrak{sp}}
\newcommand{\Symp}{\operatorname{Sp}}

\renewcommand{\sl}{\mathfrak{sl}}

\newcommand{\Glie}{\mathfrak g}

\newcommand{\Orb}{\mathcal{O}}

\newcommand{\Killing}{\kappa}

\newcommand{\sprod}[2]{\left\langle #1 \,,\, #2 \right\rangle}

\title{Nilpotent orbits of classical Lie algebras stable under negation}
\date{\today}

\author{Guillaume Neuttiens}
\affil{Friedrich-Schiller Universität Jena, Germany}
\author{Jérémie Pierard de Maujouy}
\affil{Institut Denis Poisson - Université de Tours, France}
\begin{document}

\maketitle

\begin{abstract}
    Gibbs states are probability distributions defined on Hamiltonian $G$-manifolds that are naturally parametrized by elements of the Lie algebra $\Glie$.
    In this paper, we focus on a specific case of the simplest Hamiltonian $G$-manifolds, the coadjoint orbits of Lie algebras. 
    We look at the nilpotent coadjoint orbits of the classical Lie algebras, or equivalently the nilpotent adjoint orbits.
    We show that Gibbs states do not exist on nilpotent orbits that are stable under multiplication by $-1$, and proceed to classify those for all classical Lie algebras.
\end{abstract}

\section{Introduction}

Coadjoint orbits of Lie algebras are well-studied objects, partly due to their role in representation theory, such as Kirillov's orbit method for constructing representations. 
Nilpotent coadjoint orbits are a particular subclass that is simple and easier to handle for several purposes.
In the case of semisimple Lie algebras, the nilpotent coadjoint orbits are isomorphic to the nilpotent adjoint orbits and for the classical Lie algebras can be classified with signed Young tableaux. According to the Jordan decomposition, every adjoint orbit has a nilpotent \enquote{part}, and thus the understanding of nilpotent orbits is an important stepping stone to treat more general adjoint orbits.

Nilpotent adjoint orbits of semisimple Lie algebras are of particular interest in Lie group thermodynamics, which studies the Gibbs set of \enquote{thermodynamic equilbrium states} on Hamiltonian $G$-manifolds. They have a conjectured relation with the theory of homogeneous convex cones and the corresponding symmetric spaces~\cite{VinbergConvexCones}.
It was proved in~\cite{NilpotentOrbitsGSI23} that the Gibbs sets associated with the nilpotent orbits of $\sl(2, \setR)$ are isometric to a direct product of the hyperbolic plane with a line.

The present papers stems from the following observation: the Gibbs set is empty whenever the nilpotent orbit contains opposed points.
This led to the present study of which nilpotent adjoint orbits of semisimple real Lie algebras are stable under multiplication by $-1$.

A general solution to the problem of identifying the Gibbs sets of coadjoint orbits of real Lie algebras was recently provided in~\cite{NeebGibbsEnsembles}. We think nevertheless that the observation at the root of the present paper has value because of its simplicity and could be of independant interest .
Our approach largely follows the method from~\cite{CollingwoodMcGovern}, for which we provide a review.

After the present introduction, Section~\ref{secno:AdjointOrbits} recalls general results on the nilpotent orbits of semisimple Lie algebras, and their consequences for Gibbs states: in particular, we explain why nilpotent orbits stable under negation cannot admit Gibbs states.
In Section~\ref{secno:InvariantProducts}, we review the necessary results regarding invariant products on $\sl(2, \setR)$-modules and deduce their consequences for the classification of the nilpotent adjoint orbits of the classical Lie algebras, according to the methods of~\cite{CollingwoodMcGovern}. This is applied to the problem of comparing the nilpotent orbits of two opposed nilpotent elements.
Section~\ref{secno:ClassicalLieAlgebras} applies the results of Section~\ref{secno:InvariantProducts} to every classical real Lie algebra, and we give necessary and sufficient conditions for a nilpotent orbit to be stable under the negation map.

\section{Nilpotent adjoint orbits of semisimple Lie groups}
\label{secno:AdjointOrbits}

\subsection{Standard triples}

Rather than working with just a nilpotent element of a semisimple Lie algebra, it is often convenient to include the element in a $\sl(2, \setR)$-subalgebra. A reference for this section is~\cite{CollingwoodMcGovern}.

\begin{definition}[Standard triple]
    Let $\Glie$ be a Lie algebra. A \emph{standard triple} is a triplet $(x,y,h)$ of elements of $\Glie$ satisfying the following relations:
    \begin{align*}
        [h,x] &= 2x\\
        [h,y] &= 2y\\
        [x,y] &= h.
    \end{align*}
\end{definition}

In particular, a standard triple in $\Glie$ defines a subalgebra isomorphic to $\sl(2, \setR)$; in fact, a standard triple can be understood as an embedding $\sl(2, \setR)\hookrightarrow \Glie$, once a standard triple $(X, Y, H)$ in $\sl(2, \setR)$ is fixed.
The following property allows using standard triples in the study of nilpotent elements:
\begin{theorem}[Jacobson-Morozov theorem]
\label{thmno:JacobsonMorosov}
    Let $\Glie$ be a real semisimple Lie algebra.
    For any nonzero nilpotent element $x\in \Glie$, there exist $y,h\in \Glie$ such that $(x, y, h)$ is a standard triple for $\Glie$.
\end{theorem}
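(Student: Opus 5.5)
The plan is to follow the classical argument of Jacobson and Kostant, using only linear algebra and the Killing form $\Killing$ of $\Glie$. The form $\Killing$ is nondegenerate because $\Glie$ is semisimple over $\setR$. I read the second relation of a standard triple as $[h,y]=-2y$, which is the $\sl(2,\setR)$ convention intended. Throughout, write $A = \ad x$; it is nilpotent and skew for $\Killing$, that is, $\Killing(Au,v) = -\Killing(u,Av)$.

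\emph{Step 1: construct $h$.} We want $h \in \operatorname{im} A$ with $Ah = -2x$. This amounts to showing $x \in \operatorname{im} A^2$. Since $A$ is $\Killing$-skew, $(\operatorname{im} A^2)^{\perp} = \ker A^2$, so it suffices to prove $\Killing(x,z)=0$ whenever $[x,[x,z]]=0$. Fix such a $z$ and put $w=[x,z]$, so that $[x,w]=0$ and hence $[\ad x,\ad w]=0$. From $\ad w = [\ad x,\ad z]$, an induction gives $(\ad x\,\ad z)^k = (\ad x)^{k}(\ad z)^{k} + (\text{terms carrying a factor } \ad x \text{ commuting past } \ad w)$. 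A cleaner route is to note that $\ad x\,\ad z$ is nilpotent: on the $\ad x$-filtration by kernels $\ker A^j$, the relation $[A,[A,\ad z]]=0$ shows that $A\,\ad z$ lowers the filtration. Hence $\Killing(x,z)=\operatorname{tr}(\ad x\,\ad z)=0$. This gives $z'$ with $[x,[x,z']]=-2x$, and we set $h=[x,z']$, which satisfies $[h,x]=2x$.

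\emph{Step 2: construct $y$.} Since $h\in\operatorname{im} A$, choose $y_0$ with $[x,y_0]=h$. By the Jacobi identity,
\[
[x,[h,y_0]+2y_0] = [[x,h],y_0]+[h,[x,y_0]]+2h = -2h+0+2h = 0,
\]
so $(\ad h+2)y_0$ lies in the centralizer $\mathfrak c = \ker A$. Moreover $\ad h$ preserves $\mathfrak c$, because $[h,x]=2x$. If $\ad h + 2$ is invertible on $\mathfrak c$, pick $c\in\mathfrak c$ with $(\ad h+2)c = (\ad h+2)y_0$ and set $y=y_0-c$. Then $[x,y]=h$ and $[h,y]=-2y$, which completes the triple.

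\emph{Main obstacle.} The delicate point is showing that $\ad h|_{\mathfrak c}$ has no eigenvalue $-2$; in fact I expect all its eigenvalues to be nonnegative integers. My first attempt would be the following. Since $h=[x,z']$, we have $\ad h = [A, \ad z']$ and $[\ad h, A]=2A$. I would prove by induction the identity
\[
[A^{k}, \ad z'] = k\,A^{k-1}\bigl(\ad h - (k-1)\bigr)
\]
on $\Glie$. Then, for a generalized eigenvector $u\in\mathfrak c$ of $\ad h$ with eigenvalue $\lambda$, take $k$ minimal with $A^{k}(\ad z')^{k}u=0$. Applying the identity and using $Au=0$ should force $\lambda$ to be of the form $k-1\ge 0$. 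If this bookkeeping becomes messy, the fallback is a trace argument. The operator $A\,\ad z'$ is nilpotent by Step 1, so one can compare traces of $\ad h$ on the successive quotients $\ker A^{j+1}/\ker A^{j}$, on which $A$ acts injectively and raises $\ad h$-eigenvalues by $2$. This should show that the $\ad h$-eigenvalues on $\ker A$ are the top weights of the strings, hence nonnegative.
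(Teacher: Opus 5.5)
The paper does not prove this theorem; it only cites it, so your proposal has to stand on its own. Your architecture is Jacobson's standard route: get $x\in\operatorname{im}(\ad x)^2$ by Killing-orthogonality, then apply Morozov's lemma to find $y$. The reductions in Steps 1 and 2 are correct. However, the two steps that carry the weight are not established.

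\textbf{Step 1: the filtration claim is false.} You assert that $A\,\ad z$ maps $\ker A^j$ into $\ker A^{j-1}$. For $u\in\ker A^j$ one has $A^{j-1}(A\,\ad z\,u)=j\,\ad w\,A^{j-1}u$ with $w=[x,z]$, and this need not vanish. For instance, in $\sl(3,\setR)$ take $x=E_{13}$ and $z=E_{32}$. Then $w=E_{12}$ and $[x,w]=0$, and $E_{23}\in\ker A$, yet $A\,\ad z(E_{23})=[E_{13},E_{33}-E_{22}]=E_{13}\neq 0$. The conclusion $\operatorname{tr}(\ad x\,\ad z)=0$ is nevertheless true, and there is an easy repair. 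Treat $A$ simply as a nilpotent endomorphism of the vector space $\Glie$. Jordan normal form gives $H_0,F_0\in\End(\Glie)$ with $[H_0,A]=2A$ and $[A,F_0]=H_0$. Hence $A=-\tfrac12[A,[A,F_0]]$, and therefore $\operatorname{tr}(A\,\ad z)=-\tfrac12\operatorname{tr}(F_0\,[A,[A,\ad z]])=0$.

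\textbf{Step 2: the key invertibility is unproved, and your identity is wrong.} The invertibility of $\ad h+2$ on $\mathfrak c$ is the heart of Morozov's lemma, and it is left at ``should''. The correct identity is
$[A^k,\ad z']=kA^{k-1}(\ad h+k-1)=k(\ad h-(k-1))A^{k-1}$.
You can check $k=2$: $A\,\ad h+\ad h\,A=2A(\ad h+1)$. The plan using $(\ad z')^k u$ gets no traction, because nothing controls $[\ad h,\ad z']$.

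What works is the following.
\begin{itemize}
\item Put $C_j=\ker A\cap\operatorname{im}A^j$. Each $C_j$ is $\ad h$-stable, since $\ad h\,A^j=A^j(\ad h+2j)$. Also $C_0=\mathfrak c$ and $C_N=0$ for $N$ the nilpotency index of $A$.
\item For $u\in\ker A^j$, the corrected identity gives $A^j\,\ad z'\,u=j(\ad h-(j-1))A^{j-1}u$.
\item The left side lies in $C_j$, because $A^{j+1}\ad z'\,u=(j+1)(\ad h-j)A^ju=0$.
\item Since $A^{j-1}(\ker A^j)=C_{j-1}$, it follows that $\ad h$ acts on $C_{j-1}/C_j$ as the scalar $j-1$.
\end{itemize}
Hence $\ad h+2$ is triangular on $\mathfrak c$ with nonzero diagonal scalars $j+1$, so it is invertible, which is exactly what Step 2 needs. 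With these two repairs your argument becomes a complete proof.
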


The conjugacy classes of the standard triple naturally correspond to the conjugacy classes of the nonzero nilpotent elements:
\begin{theorem}
\label{thmno:KostantConjugacy}
    Let $\Glie$ be a real semisimple Lie algebra with an adjoint group $G$.
    If $(x,y,h)$ and $(x,y', h')$ are two standard triples associated to $x$ then they are conjugate under the centralizer $Z_{G}(x)$.
\end{theorem}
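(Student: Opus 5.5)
We would prove this as in Kostant's original argument, in two steps: first reduce to comparing the neutral elements $h$ and $h'$, then conjugate $h$ to $h'$ by an element $\exp(\ad z)$ with $z$ in a nilpotent subalgebra of the centralizer $\Glie_x$. Throughout we use the relation $[h,y]=-2y$, which is the intended one (the sign in the displayed definition appears to be a typo). We also use the standard facts about finite-dimensional $\sl(2,\setR)$-modules, applied to $\Glie$ viewed as a module under $\ad$ of the subalgebra spanned by a triple.

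\textbf{Step 1: $y$ is determined by $x$ and $h$.} Suppose $(x,y,h)$ and $(x,y'',h)$ are both standard triples. Then $d=y-y''$ satisfies $[x,d]=0$ and $[h,d]=-2d$.
\begin{itemize}
\item Decompose $\Glie$ into irreducible $\sl(2,\setR)$-modules under $\ad$ of $\langle x,y,h\rangle$.
\item The kernel $\Glie_x$ of $\ad x$ is spanned by the highest weight vectors of these components.
\item Highest weights are nonnegative, so $\ad h$ has only nonnegative eigenvalues on $\Glie_x$.
\end{itemize}
Hence $d=0$. It therefore suffices to find $g\in Z_G(x)$ with $\Ad(g)h=h'$. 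Indeed, $(x,\Ad(g)y,h')$ is then a standard triple, and Step 1 forces $\Ad(g)y=y'$.

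\textbf{Step 2: $h'-h$ lies in a nilpotent part of $\Glie_x$.} Set $\mathfrak u=\Glie_x\cap[x,\Glie]$.
\begin{itemize}
\item $h'-h=[x,y'-y]$, so $h'-h\in[x,\Glie]$.
\item $[h'-h,x]=2x-2x=0$, so $h'-h\in\Glie_x$.
\end{itemize}
By the $\sl(2)$ description, $\mathfrak u$ is spanned by the highest weight vectors of the components of dimension at least $2$. Hence $\mathfrak u=\bigoplus_{k\ge1}\mathfrak u_k$, where $\mathfrak u_k$ is the $k$-eigenspace of $\ad h$. This is a graded subalgebra, since $\Glie_x$ is a subalgebra and the eigenvalues add. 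In particular $\mathfrak u$ is nilpotent and only finitely many $\mathfrak u_k$ are nonzero.

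For $z\in\mathfrak u$, the element $\exp(\ad z)$ lies in the adjoint group and fixes $x$. So $\exp(\ad z)$ belongs to $Z_G(x)$, and it sends the triple $(x,y,h)$ to another standard triple through $x$. Applying the previous paragraph to this new triple shows that $\exp(\ad z)h-h\in\mathfrak u$. Moreover, for $z\in\mathfrak u_k$,
\[
\exp(\ad z)h = h - kz + (\text{terms in }\textstyle\bigoplus_{j>k}\mathfrak u_j),
\]
because each further bracket with $z$ raises the $\ad h$-degree by $k\ge1$.

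\textbf{Step 3: inductive conjugation (the main obstacle).} This is where the care lies: after each conjugation the neutral element changes, so the grading must be tracked with respect to a fixed reference. We would keep the fixed grading by $h$ and argue by induction on the lowest degree $m$ in which the current difference is nonzero.
\begin{itemize}
\item Write $h'-h=\sum_{k\ge m}e_k$ with $e_k\in\mathfrak u_k$.
\item Take $z=-e_m/m\in\mathfrak u_m$. Then $h_1:=\exp(\ad z)^{-1}h'$ is the neutral element of a standard triple through $x$.
\item By Step 2, $h_1-h\in\mathfrak u$, and by the expansion above (applied to $-z$) its components in degrees $\le m$ vanish.
\end{itemize}
Since the grading of $\mathfrak u$ is finite, after finitely many steps the difference is zero. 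Composing the resulting elements gives $g\in Z_G(x)$ with $\Ad(g)h=h'$. By Step 1 this $g$ also satisfies $\Ad(g)y=y'$, which proves the theorem.
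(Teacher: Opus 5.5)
The paper gives no proof of this statement. It quotes it as Kostant's theorem from Collingwood--McGovern and uses it as a black box, so there is no paper argument to match; your proposal is a correct, self-contained proof.

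\textbf{Comparison with the usual route.} Your reduction is the standard one:
\begin{itemize}
\item $y$ is determined by $(x,h)$, because $\ad h$ has nonnegative eigenvalues on $\Glie_x$;
\item $h'-h\in\mathfrak u=\Glie_x\cap[x,\Glie]=\bigoplus_{k\geq1}(\Glie_x)_k$, a subspace depending only on $x$.
\end{itemize}
A common way to finish from there is an orbit argument showing that the unipotent group $\exp(\ad\mathfrak u)$ acts transitively on the affine space $h+\mathfrak u$. The orbit of $h$ is open since $[\mathfrak u,h]=\mathfrak u$, and closed since orbits of unipotent groups on affine varieties are closed.

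Your degree-by-degree elimination replaces that algebraic-group input with an explicit triangular induction, always graded by the fixed $h$. It works verbatim over $\setR$. It also exhibits the conjugating element as $\exp(\ad z_1)\cdots\exp(\ad z_N)=\Ad(\exp z_1\cdots\exp z_N)$ with $z_i\in\mathfrak u$. So you get conjugacy inside $\exp(\ad\mathfrak u)$, and inside the identity component of any Lie group with Lie algebra $\Glie$. That is the form the paper actually uses later, e.g.\ for $\SOp(p,q,\setR)$ in Lemma~\ref{lmno:CentralizerTriple}.

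\textbf{Two points of presentation.}
\begin{itemize}
\item For homogeneous $z\in\mathfrak u_k$ the expansion is exact: $\exp(\ad z)h=h-kz$, because $[z,[z,h]]=-k[z,z]=0$.
\item In Step~3, what kills the degree-$m$ component of $h_1-h$ is the combination of two facts. First, $\exp(-\ad z)h=h-e_m$. Second, $\exp(-\ad z)$ sends $\sum_{k\geq m}e_k$ to itself plus terms of degree $\geq 2m>m$. State the second fact explicitly, since the expansion of $\exp(\ad z)h$ alone does not cover it.
\end{itemize}
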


\begin{corollary}\label{corno:TriplesConjugate}
    Let $\Glie$ be a real semisimple Lie algebra with an adjoint group $G$.
    Let $(x,y,h)$ and $(x',y', h')$ be two standard triples.
	There exists $g\in G$ such that $g\cdot (x,y,h)= (x',y',h')$ if and only if $x$ and $x'$ are $G$-conjugate.
\end{corollary}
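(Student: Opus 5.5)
The plan is to prove the two implications separately. The forward one is immediate, and the converse reduces to Theorem~\ref{thmno:KostantConjugacy}.

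For the forward direction, suppose $g\in G$ satisfies $g\cdot (x,y,h)=(x',y',h')$. Here the action on triples is componentwise via $\Ad$. Reading off the first component gives $g\cdot x = x'$, so $x$ and $x'$ are $G$-conjugate.

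For the converse, suppose $g_0\cdot x = x'$ for some $g_0\in G$. Since $\Ad(g_0)$ is a Lie algebra automorphism of $\Glie$, it preserves the bracket relations $[h,x]=2x$, $[h,y]=-2y$, $[x,y]=h$. Hence $g_0\cdot(x,y,h) = (x', g_0\cdot y, g_0\cdot h)$ is again a standard triple, and its first entry is $x'$. We now have two standard triples attached to the same nilpotent element $x'$: the triple $(x', g_0\cdot y, g_0\cdot h)$ and the given triple $(x',y',h')$.

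If $x'=0$, the relations force $h'=[x',y']=0$ and $y'=-\tfrac12[h',y']=0$, so both triples are zero and any $g$ works. One can also simply assume triples are nonzero, as is implicit in the statement. If $x'\neq 0$, Theorem~\ref{thmno:KostantConjugacy} gives $z\in Z_G(x')$ with $z\cdot(x', g_0\cdot y, g_0\cdot h) = (x',y',h')$. Then $g = z g_0 \in G$ satisfies $g\cdot(x,y,h)=(x',y',h')$, which completes the converse.

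There is no real obstacle here. The only points needing care are that $\Ad(g)$ sends standard triples to standard triples, which holds because it is an automorphism, and the degenerate case $x'=0$.
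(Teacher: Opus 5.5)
Your proof is correct and is the argument the paper intends: the paper states the corollary without proof, as an immediate consequence of Theorem~\ref{thmno:KostantConjugacy}, by transporting $(x,y,h)$ by $\Ad(g_0)$ to a standard triple with first entry $x'$ and then applying Kostant's conjugacy under $Z_G(x')$. Your handling of the degenerate case $x'=0$ is a harmless extra, and so is your use of the correct relation $[h,y]=-2y$, which the paper's definition misprints as $[h,y]=2y$.
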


\subsection{Conical structure of nilpotent orbits}

We show that nilpotent orbits of semisimple Lie algebras have a conical structure.
\begin{proposition}\label{propno:NilpotentOrbitsConical}
    ~\begin{enumerate}
        \item Nilpotent coadjoint orbits of a real semisimple Lie algebra are stable under multiplication by $\setR^*_+$, the set of positive real numbers,
        \item Nilpotent coadjoint orbits of a complex semisimple Lie algebra are stable under multiplication by $\setC^*$, the set of invertible complex numbers.
    \end{enumerate}
\end{proposition}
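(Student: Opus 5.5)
The plan is to use the Killing form $\Killing$ to identify coadjoint orbits with adjoint orbits; since $\Glie$ is semisimple, $\Killing$ is nondegenerate and $G$-invariant. The identification $\Glie \to \Glie^*$, $x\mapsto \Killing(x,\cdot)$, is then a $G$-equivariant linear isomorphism. Being linear, it commutes with multiplication by scalars. It therefore suffices to prove that nilpotent \emph{adjoint} orbits are stable under $\setR^*_+$ (real case) and under $\setC^*$ (complex case). The zero orbit is trivially stable, so we take a nonzero nilpotent $x$.

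By Theorem~\ref{thmno:JacobsonMorosov}, we complete $x$ to a standard triple $(x,y,h)$. The key computation is that $[h,x]=2x$ gives
\begin{equation*}
\Ad(\exp(th))\,x = e^{\ad(th)}x = e^{2t}x
\end{equation*}
for all scalars $t$. In the real case, $\exp(th)$ lies in the adjoint group $G$ for every $t\in\setR$. As $t$ ranges over $\setR$, $e^{2t}$ ranges over all of $\setR^*_+$, so $\lambda x\in G\cdot x$ for every $\lambda>0$. In the complex case, $G$ is a complex Lie group, so $\exp(th)\in G$ for all $t\in\setC$. Since $t\mapsto e^{2t}$ is surjective from $\setC$ onto $\setC^*$, we get $\lambda x \in G\cdot x$ for every $\lambda\in\setC^*$. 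Finally, for an arbitrary element $g\cdot x$ of the orbit, we have $\lambda(g\cdot x)=g\cdot(\lambda x)$ by linearity, so the whole orbit is stable.

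There is no serious obstacle here. The only point needing care is the transfer from adjoint to coadjoint orbits: one must check that the Killing-form isomorphism matches the notion of "nilpotent" on both sides. We will simply take this matching as the definition of a nilpotent coadjoint orbit, as the paper does implicitly when it calls the two kinds of orbits isomorphic. One should also note that $\Ad\circ\exp = \exp\circ\ad$ inside the adjoint group, which justifies the displayed formula.
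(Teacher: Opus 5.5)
Your proof is correct and follows essentially the same approach as the paper: complete $x$ to a standard triple via Jacobson--Morozov and use $e^{t\ad_h}x = e^{2t}x$ with $t$ real or complex. The paper writes the complex case as $\exp(a\ad_h + b\ad_{Ih})$, which coincides with your $\exp(z\ad_h)$ since $\ad_{Ih} = I\ad_h$, and your explicit transfer through the Killing form and your handling of the zero orbit are small clarifications the paper leaves implicit.
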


\begin{proof}
    Let $\Glie$ be a real semisimple Lie algebra and $x$ a nilpotent element. According to Theorem~\ref{thmno:JacobsonMorosov} there exists $h\in \Glie$ such that $[h,x]=2x$. For all $t\in \setR$,
    \[
        \exp(t\ad_h)(x) = e^{2t}x   
    \]
    belongs to the coadjoint orbit of $x$. Therefore $\setR^*_+ x$ is contained in the nilpotent orbit of $x$.

    If $\Glie$ admits a complex structure $I$, then for every $z=a+ib\in \setC$, 
    \[
        \exp(a\ad_h + b \ad_{Ih})(x) = e^{2z} x,    
    \]
    which proves the second point.
\end{proof}

\subsection{Gibbs states}

Let $\Glie$ be a semisimple Lie algebra and let $\Killing$ be its Killing form. Given any $X\in \Glie$, we shall write $X^*$ for the associated fundamental vector field on $\Glie$: $X^*|_x = \ad_X x$.
Then any adjoint orbit $\Orb$ of $\Glie$ is a Hamiltonian $G$-manifold with the Kirillov-Kostant-Souriau symplectic form:
\[
    \omega_{\Orb}|_x : 
    \left(
        X^*, Y^*
    \right)
    \mapsto
    \Killing(x, [X, Y]).
\]
The associated Liouville volume form is $\lambda_{\Orb} = \omega_{\Orb}^{\dim \Orb /2}$. 

Let $\beta\in \Glie^*$. The following integral is called the \emph{partition function} of $\Orb$:
\[
    Z(\beta) = \int_{\Orb} e^{-\sprod{\beta}{y}} |\lambda_{\Orb}| (y)
    \in (0, +\infty]
\]
When 
$Z(\beta) < \infty$,
one defines the associated \emph{Gibbs state}~\cite{SSDEng, MarleGibbsStates}, a probability distribution on $\Orb$:
\[
    y \in \Orb \mapsto
    \frac{
    e^{-\sprod{\beta}{y}} |\lambda_{\Orb}| (y)
    }{
    Z(\beta)
    }
.\]
Gibbs states are distributions of \emph{maximal entropy} under a constraint of fixed average value in $\Glie$. In a physical context, they describe states of thermodynamic equilibrium.

We will prove that the partition function of an adjoint orbit that contains opposed elements is never finite, and therefore such an orbit has no Gibbs state.

We show first that the Kirillov-Kostant-Souriau form is homogeneous.

\begin{proposition} 
\label{propno:KKSHomogene}
    For $x\in \Glie$, write $\Orb_x$ the adjoint orbit of $x$ and let $2d$ be its dimension.
    Let $t\in \setR^*$.
    Then $\Orb_{tx} = t\cdot \Orb_x$ and 
    \begin{enumerate}
        \item     \(
        (t\Id)^* \omega_{\Orb_{tx}} = t \omega_{\Orb_x}
                ,\)
        \item 
        \(
           (t \Id)^* \lambda_{\Orb_{tx}} = t^d \lambda_{\Orb_x}
       .\)
    \end{enumerate}
\end{proposition}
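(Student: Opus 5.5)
The plan is a direct computation from the definition of the Kirillov--Kostant--Souriau form, using only that $\ad_X$ is linear. Write $\phi = t\Id$ for $t\in\setR^*$.

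\textbf{Step 1: $\Orb_{tx} = t\cdot\Orb_x$.} Since each $\Ad_g$ is linear, $\Ad_g(tx) = t\,\Ad_g(x)$. Hence $\phi$ maps $\Orb_x$ bijectively onto $\Orb_{tx}$, with inverse $t^{-1}\Id$. So $\phi$ is a diffeomorphism between the two orbits, and the pullbacks in the statement make sense.

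\textbf{Step 2: compare fundamental vector fields.} For $X\in\Glie$ and $y\in\Orb_x$, the differential of $\phi$ is multiplication by $t$. Therefore
\[
d\phi_y\bigl(X^*|_y\bigr) = t\,\ad_X y = \ad_X(ty) = X^*|_{ty}.
\]
So $\phi$ intertwines the fundamental vector fields on the two orbits, and this is the key point. The vectors $X^*|_y$ span $T_y\Orb_x$, so it suffices to evaluate both forms on pairs of such vectors. We get
\[
(\phi^*\omega_{\Orb_{tx}})|_y(X^*,Y^*) = \omega_{\Orb_{tx}}|_{ty}(X^*,Y^*) = \Killing(ty,[X,Y]) = t\,\Killing(y,[X,Y]) = t\,\omega_{\Orb_x}|_y(X^*,Y^*),
\]
using bilinearity of $\Killing$. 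This proves the first item.

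\textbf{Step 3: the volume form.} Pullback commutes with wedge products, so with $\dim\Orb_x = 2d$ we have
\[
\phi^*\lambda_{\Orb_{tx}} = (\phi^*\omega_{\Orb_{tx}})^{\wedge d} = (t\,\omega_{\Orb_x})^{\wedge d} = t^d\,\lambda_{\Orb_x}.
\]
This proves the second item.

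The only point requiring care is well-definedness. The KKS form is defined through the values $\Killing(y,[X,Y])$, which must not depend on the choice of $X,Y$ representing a given tangent vector. This is standard: if $X^*|_y = 0$ then $[X,y]=0$, and invariance of $\Killing$ gives $\Killing(y,[X,Y]) = \Killing([y,X],Y) = 0$. Once this is granted, the argument reduces to the intertwining identity in Step 2, and I expect no real obstacle. For $t<0$ the identities hold as written; any orientation change only matters for $|\lambda|$, which picks up $|t|^d$.
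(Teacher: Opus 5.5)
Your proof is correct and follows the paper's own argument. The paper also gets $\Orb_{tx} = t\cdot\Orb_x$ from linearity of $\Ad$, uses $t[X,x] = [X,tx]$ to get $(t\Id)_*X^* = X^*$, and applies bilinearity of $\Killing$ before taking the $d$-th wedge power; your remarks on well-definedness of the KKS form and on $|t|^d$ for $|\lambda|$ when $t<0$ are accurate details the paper leaves implicit.
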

\begin{proof}
    The identity $\Orb_{tx} = t\cdot \Orb_x$ is a direct consequence of the linearity of the adjoint action of $G$.
    Let $X, Y, x \in \Glie$. Notice that $t[X, x] = [X, tx]$. Therefore $(t\Id)_*X^* = X^*$ and
    \[
        \omega_{\Orb_tx} (X^*, Y^*) = \Killing(tx, [X,Y]) = t \Killing(x, [X,Y]) = t\omega_{\Orb_x}(X^*, Y^*) 
    ,\]
    which proves the homogeneity of the Kirillov-Kostant-Souriau form. Homogeneity of the Liouville form ensues.
\end{proof}

We can now prove that the partition function of a nilpotent orbit stable under negation always diverges.

\begin{proposition}\label{propno:NoGenTemp}
    Let $x$ be a nonzero nilpotent element in $\Glie$.
    If $x$ and $-x$ are in the same adjoint orbit, then for all $\beta\in \Glie$,
    \[
        \int_{\Orb_x} e^{-\sprod{\beta}{y}} |\lambda_{\Orb_x}| (y)
        = \infty
    .\]
\end{proposition}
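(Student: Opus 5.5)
The argument I would give combines Proposition~\ref{propno:NilpotentOrbitsConical} with Proposition~\ref{propno:KKSHomogene}. The aim is to show that the integrand does not decay along rays of the orbit. Since $-x\in\Orb_x$, we have $\Orb_x=-\Orb_x$. Since $\Orb_x$ is also stable under $\setR^*_+$, it is stable under $\setR^*$. For $t>0$, Proposition~\ref{propno:KKSHomogene} gives $\Orb_{tx}=\Orb_x$ and $(t\Id)^*\lambda_{\Orb_x}=t^d\lambda_{\Orb_x}$. For $t=-1$ it gives $(-\Id)^*|\lambda_{\Orb_x}|=|\lambda_{\Orb_x}|$. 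So the measure $\mu=|\lambda_{\Orb_x}|$ is preserved by negation and scales by $t^d$ under dilation by $t>0$.

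Fix $\beta$ and write $Z(\beta)=\int_{\Orb_x}e^{-\sprod{\beta}{y}}\,d\mu(y)$. Applying the change of variables $y\mapsto -y$, which preserves both $\Orb_x$ and $\mu$, we get $Z(\beta)=\int e^{\sprod{\beta}{y}}\,d\mu$. Averaging the two expressions,
\[
  Z(\beta)=\int_{\Orb_x}\cosh\bigl(\sprod{\beta}{y}\bigr)\,d\mu(y)\;\geq\;\mu(\Orb_x).
\]
It then suffices to show that $\mu(\Orb_x)=\infty$.

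Since $x\neq 0$, the orbit is a nonempty manifold and $\mu$ has a smooth positive density, so some relatively compact open set $U\subset\Orb_x$ has $0<\mu(U)<\infty$. Applying dilation, $\mu(tU)=t^d\mu(U)$ for $t>0$. Each $tU$ lies in $\Orb_x$, so $\mu(\Orb_x)\ge t^d\mu(U)$ for every $t$. Because $d\geq1$ (a nonzero nilpotent orbit is not a point, since $[h,x]=2x\neq0$ shows $x$ is not central), letting $t\to\infty$ gives $\mu(\Orb_x)=\infty$. Hence $Z(\beta)=\infty$.

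The only delicate step is checking the sign and orientation bookkeeping in the change of variables. The tangent identification at $y$ versus $ty$ must match the convention of Proposition~\ref{propno:KKSHomogene}, and one must work with $|\lambda|$ rather than $\lambda$, since $(-\Id)^*\lambda=(-1)^d\lambda$. With absolute values, the pullback identity makes both the negation step and the dilation step pure measure-theoretic substitutions.
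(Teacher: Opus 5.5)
Your proof is correct and follows the paper's own three-step argument. Negation-invariance of $|\lambda_{\Orb_x}|$ gives $Z(\beta)=Z(-\beta)$, averaging gives the $\cosh\geq 1$ lower bound by the total volume, and dilation homogeneity forces that volume to be infinite. Your relatively compact open set $U$ with $0<\mu(U)<\infty$, together with $d\geq 1$, simply makes explicit the paper's terse remark that the volume ``cannot be zero.''
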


The proof will be organized into three short arguments.
\begin{proof}
    ~\begin{enumerate}
        \item Let $\beta\in \Glie$. Then
        \begin{equation*}
            \begin{aligned}
          \int_{\Orb_x} e^{-\sprod{-\beta}{y}} |\lambda_{\Orb_x}| (y)
          &= \int_{\Orb_x} e^{-\sprod{\beta}{-y}} |\lambda_{\Orb_x}| (y)\\
          &= \int_{-\Orb_x} e^{-\sprod{\beta}{y'}} |\lambda_{-\Orb_x}| (y')
            ,\end{aligned}
        \end{equation*}
        with the last line using the homogeneity of $\lambda$ (Propostion~\ref{propno:KKSHomogene}). In particular, when $-\Orb_x = \Orb_x$, then the integral has the same value for the parameters $\beta$ and $-\beta$.

        \item \emph{Nonzero nilpotent orbits have infinite volume.} Let $t\in \setR_+^*$. Then a pullback under the diffeomorphism $t\id : \Orb_x \to \Orb_x$ gives
        \[
            \int_{\Orb_x} |\lambda_{\Orb_x}|
            = \int_{\Orb_x} (t\id)|^*\lambda_{\Orb_x}|
            = t^{\dim \Orb_x / 2} \int_{\Orb_x} |\lambda_{\Orb_x}|
        \quad,\]
        which can only hold for an infinite volume (since the volume cannot be zero).

        \item \emph{Convexity of the integral with respect to $\beta$.}
        Assuming that $-\Orb_x = \Orb_x$, 
        \begin{equation*}
            \begin{aligned}
                  \int_{\Orb_x} e^{-\sprod{\beta}{y}} |\lambda_{\Orb_x}| (y)
                &= \frac12 \int_{\Orb_x} e^{-\sprod{\beta}{y}} + e^{-\sprod{-\beta}{y}} |\lambda_{\Orb_x}| (y)
                \geqslant \int_{\Orb_x} 1 |\lambda_{\Orb_x}| (y)
                = \infty
                .
            \end{aligned}
        \end{equation*}
    \end{enumerate}
\end{proof}

\begin{corollary}
	No nonzero nilpotent adjoint orbit of a complex semisimple Lie algebra admits Gibbs states.
\end{corollary}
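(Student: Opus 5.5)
The plan is to reduce the corollary to Proposition~\ref{propno:NoGenTemp}. It suffices to show that every nonzero nilpotent adjoint orbit of a complex semisimple Lie algebra is stable under multiplication by $-1$. The proposition then tells us that the partition function $Z(\beta)$ is infinite for every $\beta$, so no Gibbs state can be defined.

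The stability under negation comes from the second point of Proposition~\ref{propno:NilpotentOrbitsConical}: nilpotent orbits of a complex semisimple Lie algebra are stable under $\setC^*$, and $-1\in\setC^*$. To see this concretely, take a standard triple $(x,y,h)$ from Theorem~\ref{thmno:JacobsonMorosov}. Choose $z = i\pi/2$, so that $e^{2z} = -1$. Then
\[
\exp\bigl(\ad_{(\pi/2) Ih}\bigr)(x) = e^{i\pi}x = -x,
\]
where we use that $\ad_{Ih} = I\circ\ad_h$ by complex-linearity of the bracket. Hence $-x\in\Orb_x$, and $-\Orb_x = \Orb_{-x} = \Orb_x$ by Proposition~\ref{propno:KKSHomogene}.

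Some care is needed about which real structure and which pairing are in use. One regards the complex semisimple algebra $\Glie$ as a real semisimple Lie algebra, since its realification is semisimple. The adjoint group is the same connected group, so the orbits are the same sets. The Killing form and the pairing $\sprod{\beta}{y}$ are then taken in the real sense, so that $\sprod{\beta}{y}$ is real and the exponential integrand is positive. With this setup, Proposition~\ref{propno:NoGenTemp} applies verbatim to the realification.

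The main obstacle, which is mild, is checking that $\exp(t\,\ad_{Ih})$ really lies in the adjoint group acting on the orbit, and that the realified setting matches the hypotheses used in Section~\ref{secno:AdjointOrbits}. Both are immediate, since $Ih\in\Glie$ and $\exp\ad$ of any element of $\Glie$ is in the adjoint group.
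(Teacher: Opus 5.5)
Your proof is correct and follows the paper's argument: the orbit is stable under $-1\in\setC^*$ by the second point of Proposition~\ref{propno:NilpotentOrbitsConical}, and Proposition~\ref{propno:NoGenTemp} then finishes. Your explicit choice $z=i\pi/2$ and your remarks on the realification only spell out details the paper leaves implicit.
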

\begin{proof}
    According to Proposition~\ref{propno:NilpotentOrbitsConical}, nilpotent adjoint orbits of a complex semisimple Lie algebra are stable under negation, since $-1\in \setC^*$.
\end{proof}

\section{Invariant products on $\sl(2, \setR)$-modules}\label{secno:InvariantProducts}

Most classical Lie algebras are characterized by an invariant bilinear form. According to Theorem~\ref{thmno:KostantConjugacy}, the classification of conjugacy classes of nilpotent elements can be handled at the level of standard triples. Therefore, we are interested into standard triples up to automorphisms of bilinear forms, namely, in isomorphism classes of $\sl(2, \setR)$-modules with a suitable invariant bilinear form. 

We thus need to discuss the invariant bilinear forms on $\sl(2, \setR)$-modules of finite dimension. Our approach reproduces that of~\cite{CollingwoodMcGovern}
\subsection{Irreducible $\sl(2, \setR)$-modules}

Recall that an irreducible $\sl(2, \setR)$-module of finite dimension is characterized up to isomorphism by its dimension. If $(V, \rho)$ is the irreducible real $\sl(2, \setR)$-module of dimension $r+1$, then it decomposes into a direct sum of $1$-dimensional weight spaces
\[
   V = 
    \bigoplus_{i=0}^r V(r-2i)
\]
with $V(k) = \ker (\rho(H)-k\id_V)$.

We recall the following standard fact about invariant products on $\sl(2, \setR)$-modules:
\begin{proposition}\label{propno:InvariantProductssl2modules}
    Let $V$ be an irreducible real $\sl(2, \setR)$-module. The space of invariant bilinear forms on $V$ has dimension $1$. 
    If $V$ has even dimension, the invariant forms are symplectic. 
    If $V$ has odd dimension $2m+1$, the invariant forms are symmetric of signature $(m+1,m)$ (or $(m,m+1)$).
\end{proposition}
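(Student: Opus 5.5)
The plan is to identify invariant bilinear forms on $V$ with $\sl(2,\setR)$-module maps $V\to V^*$, and then to compute an invariant form explicitly on a weight basis. Since $V$ is irreducible, so is $V^*$, and both have dimension $r+1$. Irreducible real $\sl(2,\setR)$-modules are classified by dimension, so $V\cong V^*$. By Schur's lemma, $\End_{\sl(2,\setR)}(V)$ is a real division algebra. It is exactly $\setR$, because any equivariant endomorphism preserves the one-dimensional weight spaces $V(k)$ and is determined by its scalar on the highest weight vector, which generates $V$. Hence $\Hom_{\sl(2,\setR)}(V,V^*)$ is one-dimensional, and so is the space of invariant forms. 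One could also see this directly: an invariant form $B$ satisfies $B(Hu,v)=-B(u,Hv)$, so $B(V(k),V(l))=0$ unless $k+l=0$, and $B$ is then pinned down by the single value $B(v_0,v_r)$.

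To get the symmetry type and the signature, I will use the explicit model $V=\setR[u,w]_r$ of homogeneous polynomials of degree $r$, with $X=u\partial_w$, $Y=w\partial_u$, $H=u\partial_u-w\partial_w$. This matches the paper's sign conventions for the triple. I take the basis $e_i=u^{r-i}w^i$ and try the form
\[
B(e_i,e_j)=\delta_{i+j,r}\,(-1)^i\binom{r}{i}^{-1}.
\]
The candidate comes from the invariant symplectic form $\omega(u,w)=1$ on $\setR^2$, extended to the symmetric power. Invariance under $H$ is immediate from the weight pairing. Invariance under $X$ only needs checking on pairs $(e_i,e_j)$ with $i+j=r+1$, and reduces to an identity of binomial coefficients. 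By the uniqueness in the first step, it is enough to exhibit one nonzero invariant form, so any sign mistake in the candidate can be fixed by adjusting $(-1)^i$ until the invariance check passes.

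Symmetry then follows from $B(e_j,e_i)=(-1)^{r-i}\binom{r}{r-i}^{-1}=(-1)^r B(e_i,e_j)$. So $B$ is skew when $r$ is odd, that is when $\dim V$ is even, and nondegenerate because its matrix is antidiagonal with nonzero entries; this gives the symplectic case. For $\dim V=2m+1$ we have $r=2m$ and $B$ is symmetric. Its matrix is block diagonal:
\begin{itemize}
\item there are $m$ hyperbolic planes spanned by $e_i,e_{r-i}$ for $i<m$, each of signature $(1,1)$;
\item there is one line spanned by $e_m$, with $B(e_m,e_m)=(-1)^m\binom{2m}{m}^{-1}\neq0$.
\end{itemize}
The total signature is therefore $(m+1,m)$ or $(m,m+1)$, depending on the sign of $(-1)^m$ and of the chosen scalar multiple. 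The main obstacle I expect is getting the sign bookkeeping right in the invariance check. Uniqueness and the signature count are formal once a correct nonzero form is in hand.
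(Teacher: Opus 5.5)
Your proof is correct, but after the Schur step it takes a different route from the paper's. The paper stays abstract from there:
\begin{itemize}
\item The symmetric and skew parts of an invariant form are again invariant, so the one-dimensional space consists entirely of symmetric or entirely of skew forms.
\item Which of the two is decided by moving $\rho(y)$ across $r$ times in the nonzero pairing $B(a,\rho(y)^r a)=(-1)^r B(\rho(y)^r a,a)$, where $a$ is a highest weight vector.
\item Nondegeneracy follows from irreducibility, because the kernel is a submodule.
\item The signature comes from the weight pairing, which splits $V$ into the hyperbolic planes $V(2i)\oplus V(-2i)$ and the line $V(0)$.
\end{itemize}
You instead fix the model $\mathbb{R}[u,w]_r$ and write the Gram matrix in the weight basis explicitly. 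You read off symmetry, nondegeneracy and signature from an antidiagonal matrix, and let uniqueness transfer these properties to every invariant form. The paper's route is model-free and needs no computation. Yours costs a binomial identity, but it gives explicit normalizations. For example, the normalization $B(e_0,e_r)=1$ forces sign $(-1)^m$ on $V(0)$. This is the kind of data the paper later has to fix by hand when it normalizes $B_r$ and chooses bases with $B(e_k,e_{-k})=1$.

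One small point: you only mention invariance under $H$ and $X$, which span a Borel subalgebra, not all of $\mathfrak{sl}(2,\mathbb{R})$. You can close this in either of two ways. You can check $Y$ as well; it reduces to the identity $(r-i)\binom{r}{i+1}^{-1}=(i+1)\binom{r}{i}^{-1}$. Alternatively, note that $H$- and $X$-invariance already force $\beta_i=B(e_i,e_{r-i})$ to satisfy $i\beta_{i-1}+(r+1-i)\beta_i=0$. So such forms make up a single line, which must then be the line of invariant forms whose existence you obtained from $V\cong V^*$. Your candidate is exactly the solution of this recurrence, so no sign adjustment is needed.
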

We follow the arguments of~\cite[Chapter 5]{CollingwoodMcGovern}.
\begin{proof}
    Invariant bilinear forms on $V$ are in one-to-one correspondance with the $\sl(2, \setR)$-equivariant linear maps $V\to V^*$. Since $V$ and $V^*$ have the same dimension, they are isomorphic representations of $\sl(2, \setR)$. Furthermore, since they are highest weight modules, the space of homomorphisms of representation has dimension $1$. This proves that the invariant bilinear forms on $V$ form a space of dimension $1$. Furthermore, since the symmmetric and antisymmetric parts of an invariant bilinear form are invariant, $V$ is either constituted of symmetric forms or antisymmetric forms.

    Let $B$ be a nonzero invariant bilinear form on $V$. Since $\ker B$ is an invariant submodule, $B$ is nondegenerate.
    Define $r:=\dim V-1$ and,
    for $\lambda\in \setR$, define $V(\lambda) := \ker (\rho(H)-\lambda \id_V)$ with $\rho$ the representation of $\Glie$ on $V$.
    Let $a\in V(\lambda)$ and $b\in V(\mu)$. Then
    \[
        0
            = B(\rho(H)a, b) + B(a, \rho(H)b)
            = B(\lambda a, b) + B(a, \mu b)
            = (\lambda + \mu) B(a,b)
    \]
    so that $\lambda + \mu \neq 0 \implies V(\lambda) \perp_B V(\mu)$. 

    Let $a\in V({r})$ be a nonzero highest weight vector. Then $\rho(y)^r a$ is a nonzero lowest weight vector, so that 
    \[
        0 
        \neq B(a, \rho(y)^r a) 
        = B((-\rho(y))^r a, a) 
        = (-1)^r B(\rho(y)^r a, a) 
        .
    \]
    Since $B$ is either symmetric or antisymmetric, if $r$ is even, $B$ is symmetric, and if $r$ is odd, $B$ is antisymmetric.

    Assume $r$ is even: $B$ is symmetric. Then $V$ decomposes into an orthogonal sum of hyperbolic vector planes and a vector line
    \[
        V = 
            \bigoplus_{1 \leqslant i \leqslant \frac{r}2} 
                {}^{\hspace{-1em}\perp} \;
                (V({-2i}) \oplus V({2i})) \oplus_\perp V(0)
    \]
    Each hyperbolic plane $V({-2i}) \oplus V({2i})$ has signature $(1,1)$, and the signature of $V(0)$ can be either positive or negative, therefore the global signature of $B$ is either $(m,m+1)$ or $(m+1, m)$. 
\end{proof}

For every $r \geqslant 0$, we fix an irreducible $\sl(2, \setR)$-module $(V_r, \rho_r)$ of dimension $r+1$, as well as a non-degenerate bilinear form $B_r$ on $V_r$.
When $r=2m$, we choose $B_i$ to have signature $(m+1, m)$ (namely, $V(0)$ is always a positive subspace). 
When $r$ is odd, we choose $B_i$ such that $\forall v\in V_r({-1}),\, B_i(v, X\cdot v) \geqslant 0$ with $X = \begin{pmatrix}
	0 & 1 \\ 0 & 0
\end{pmatrix}$.

In order to compare the adjoint orbits of $x$ and $-x$, we will need an element that conjugates them:
\begin{proposition}\label{propno:x->-x}
    Let $(x,y,h)$ be a standard triple in $\sl(2, \setR)$.
        There exists an invertible endomorphism $u_r: V_r\to V_r$ such that 
        \begin{align}
            u_r \rho_r(x) u^{-1}_r &= -\rho_r(x)\\
            u_r \rho_r(y) u^{-1}_r &= -\rho_r(y)\\
            u_r \rho_r(h) u^{-1}_r &= \rho_r(h)
        \end{align}
        and for any $\sl(2, \setR)$-invariant bilinear form $B$, $u_r^*B = (-1)^r B$.
\end{proposition}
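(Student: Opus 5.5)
The natural candidate for $u_r$ is the action of the torus element $\exp(i\pi h/2)$, but that is complex. Over $\setR$ I will define $u_r$ directly on the weight decomposition instead. Concretely, I will set $u_r$ to act on the weight space $V_r(r-2i)$ by the scalar $(-1)^i$. This is the same as letting $u_r$ act on $V_r(k)$ by $(-1)^{(r-k)/2}$.

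Fix a nonzero highest weight vector $a\in V_r(r)$. The vectors $\rho_r(y)^i a$, for $0\le i\le r$, form a basis of weight vectors, with $\rho_r(y)^i a\in V_r(r-2i)$. Define $u_r$ on this basis by $u_r(\rho_r(y)^i a) = (-1)^i \rho_r(y)^i a$. The three relations are then checked weight space by weight space.
\begin{enumerate}
\item Since $u_r$ preserves each weight space, it commutes with $\rho_r(h)$.
\item $\rho_r(y)$ maps $V_r(r-2i)$ into $V_r(r-2i-2)$, so $u_r\rho_r(y)=-\rho_r(y)u_r$, because the sign flips between consecutive weights.
\item $\rho_r(x)$ raises the weight by $2$, so the same argument gives $u_r\rho_r(x)=-\rho_r(x)u_r$.
\end{enumerate}
Moreover $u_r^2=\id$, so $u_r$ is invertible with $u_r^{-1}=u_r$. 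This gives the three displayed identities.

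For the bilinear form, I will use the orthogonality established in the proof of Proposition~\ref{propno:InvariantProductssl2modules}: $B(a,b)=0$ for $a\in V_r(\lambda)$, $b\in V_r(\mu)$ unless $\lambda+\mu=0$. So it suffices to compute $u_r^*B$ on pairs $(a,b)$ with $a\in V_r(r-2i)$ and $b\in V_r(-(r-2i))=V_r(r-2(r-i))$. On such a pair,
\[
    (u_r^*B)(a,b)=B(u_ra,u_rb)=(-1)^{i}(-1)^{r-i}B(a,b)=(-1)^rB(a,b).
\]
Both $u_r^*B$ and $(-1)^rB$ vanish on all other pairs of weight spaces. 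Hence $u_r^*B=(-1)^rB$. This holds for every invariant $B$, and in particular for $B_r$.

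The only delicate point is the sign bookkeeping: the exponent $(r-k)/2$ is an integer on each weight $k$ of $V_r$, and the two exponents paired by $B$ sum to $r$. Neither step requires the one-dimensionality of the space of invariant forms. That one-dimensionality does give a consistency check: since $u_r$ conjugates $\rho_r$ into the representation twisted by an automorphism of $\sl(2,\setR)$, $u_r^*B$ is again invariant and must equal $\pm B$. Also, the conclusion holds for any standard triple $(x,y,h)$ in $\sl(2,\setR)$ and not only for $(X,Y,H)$. The only input needed is that $\rho_r(h)$ is diagonalizable with weights $r-2i$, with $\rho_r(x)$ and $\rho_r(y)$ shifting weights by $\pm2$, and this holds for any standard triple.
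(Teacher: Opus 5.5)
Your proposal is correct and is essentially the paper's proof. It uses the same $u_r=\bigoplus_i(-1)^i\id_{V_r(r-2i)}$, the same observation that $\rho_r(x),\rho_r(y)$ shift weights by $\pm2$ while $\rho_r(h)$ preserves them, and the same orthogonality of non-opposite weight spaces to reduce $u_r^*B$ to the pairing of $V_r(r-2i)$ with $V_r(2i-r)$, where your sign count $(-1)^i(-1)^{r-i}=(-1)^r$ is cleaner than the exponent bookkeeping written in the paper.
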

\begin{proof}
    We decompose $V$ in weight spaces for $h$:
    \[
        V = \bigoplus_{i=0}^r V({r-2i})
    \]
    and define $u_r = \bigoplus_i (-1)^{i} \id_{V({r-2i})}$.
    It satisfies 
    $u_r \rho_r(x) u_r^{-1} = -\rho_r(x)$, 
    $u_r \rho_r(y) u^{-1}_r = -\rho_r(y)$ and 
    $u_r \rho_r(h) u^{-1}_r = \rho_r(h)$. 
    Recall that the invariant product on $V$ couples $V({r-2i})$ with $V({2i-r})$ and that weight spaces of non-opposite weights are orthogonal.
    For any $i$, $2i-r = r -2(r-i)$ therefore $u_r$ acts on
    $
        V({r-2i})\otimes V({2i-r})
    $
    by multiplication with $(-1)^{i+r} (-1)^{2r-i} = (-1)^r$.

    If $r$ is even, $u_r$ therefore preserves any invariant bilinear form.
    If $r$ is odd, $u_r$ defines a congruence between any invariant symplectic form and its opposite: $u_r^*B = -B$ for any invariant symplectic form $B$.
\end{proof}

\begin{remark}
    For even $r$, we will use later the alternative choice $\tilde u_r = (-1)^{r/2}\bigoplus_i (-1)^{i} \id_{V(r-2i)}$, that has the benefit of having determinant $1$.
\end{remark}

\subsection{General $\sl(2, \setR)$-modules with invariant product}

We will handle $\sl(2, \setR)$-modules using the following form of isotypic decomposition:
\begin{proposition}\label{propno:sl2modKSpace}
	Let $V$ be a finite dimensional $\sl(2, \setR)$-module equipped with a linear action of $\setK\in \{\setR, \setC, \setH\}$ that commutes with the action of $\sl(2, \setR)$. Then $V$ is $\setK$-linearly isomorphic to an $\sl(2, \setR)$-module of the form
	\[
		\bigoplus_r V_r \otimes_\setR W_r
	\]
	with $V_r$ the irreducible real $\sl(2, \setR)$-module of dimension $r+1$ and $W_r$ a $\setK$-vector space (with trivial action of $\sl(2, \setR)$).

    If $V'$ is another $\setK$-linear $\sl(2, \setR)$-module of the form
    \[
        \bigoplus_r V_r \otimes_\setR W'_r
    \]
    then any $\setK$-linear map of $\sl(2, \setR)$-modules $f:V\to V'$ takes the form
    \[
        \bigoplus_r \id_{V_r} \otimes f_r
    \]
    with $f_r\in \Hom_{\setK \hVect}(W_r, W'_r)$.
\end{proposition}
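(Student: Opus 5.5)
The plan is to define the multiplicity spaces as spaces of highest weight vectors, show that the natural evaluation map is a $\setK$-linear isomorphism of $\sl(2,\setR)$-modules, and then read off the description of morphisms by restricting to highest weight vectors.

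\textbf{Multiplicity spaces.} Fix the standard triple $(X,Y,H)$ of $\sl(2,\setR)$. For each $r$, set
\[
    W_r := \ker(\rho(X)) \cap \ker(\rho(H) - r\id_V),
\]
the space of highest weight vectors of weight $r$. Since the action of $\setK$ commutes with $\rho(X)$ and $\rho(H)$, each $W_r$ is a $\setK$-subspace of $V$. Fix also a highest weight vector $v_r \in V_r(r)$.

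\textbf{The isomorphism.} Define
\[
    \Phi : \bigoplus_r V_r \otimes_\setR W_r \to V,
    \qquad
    \rho_r(Y)^k v_r \otimes w \mapsto \rho(Y)^k w .
\]
I would check three things.
\begin{enumerate}
    \item $\Phi$ is well defined and $\sl(2,\setR)$-equivariant. The vectors $\rho_r(Y)^k v_r$, $0 \leqslant k \leqslant r$, form a basis of $V_r$. By the universal property of the Verma module, for each $w \in W_r$ the map $u \mapsto u\cdot w$ factors through $V_r$. This uses that $\rho(Y)^{r+1} w = 0$, which is the standard finite-dimensional fact that a highest weight vector of weight $r$ generates a copy of $V_r$.
    \item $\Phi$ is $\setK$-linear. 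This follows because $\setK$ acts only on $W_r$ and commutes with $\rho(Y)$.
    \item $\Phi$ is bijective. By complete reducibility (Weyl's theorem), $V$ is a direct sum of irreducibles. In each irreducible summand of dimension $r+1$ the highest weight vectors form a line, so $\dim_\setR W_r$ equals the multiplicity of $V_r$ in $V$. Hence the two sides have equal real dimension. Surjectivity is clear, since every irreducible summand is generated by its highest weight vector. Therefore $\Phi$ is an isomorphism.
\end{enumerate}

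\textbf{Morphisms.} Let $f : V \to V'$ be a $\setK$-linear map of $\sl(2,\setR)$-modules. Since $f$ commutes with $\rho(X)$ and $\rho(H)$, it maps $W_r$ into $W'_r$. Let $f_r := f|_{W_r} : W_r \to W'_r$; it is $\setK$-linear. Then $f$ and $\bigoplus_r \id_{V_r} \otimes f_r$ agree on the vectors $v_r \otimes w$. Both are $\sl(2,\setR)$-equivariant, and these vectors generate $V$ as an $\sl(2,\setR)$-module (through $\rho(Y)$). Hence the two maps coincide.

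\textbf{Main obstacle.} The only step needing care is the dimension count $\dim W_r = $ multiplicity of $V_r$, which rests on complete reducibility. Everything else is formal once the generators $v_r \otimes w$ are fixed.
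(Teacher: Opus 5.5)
Your proof is correct, but it takes a different route from the paper's. The paper argues abstractly. It splits $V$ into isotypic components $C_r$, writes $C_r \simeq V_r\otimes_\setR W_r$, and then invokes Schur's lemma twice: once to transfer the commuting $\setK$-action onto $W_r$, and once to show that a $\setK$-linear module map has the form $\bigoplus_r \id_{V_r}\otimes f_r$.

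You instead realize $W_r$ concretely as the $\setK$-subspace of highest weight vectors of weight $r$. This is the paper's canonical choice $\Hom_{\sl(2,\setR)\hMod}(V_r,V)$, identified via evaluation at $v_r$. You then write down the evaluation map $\Phi$ explicitly and get bijectivity from a dimension count that uses only Weyl's theorem.

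Your approach gains three things:
\begin{itemize}
\item The $\setK$-structure on $W_r$ is immediate, since $W_r$ is literally a $\setK$-subspace of $V$.
\item You never rely on $\End_{\sl(2,\setR)}(V_r)=\setR$. The paper's use of Schur's lemma needs this fact, because real Schur alone only gives a division algebra; it holds since $V_r(r)$ is one-dimensional, and your highest-weight argument builds this in.
\item Your morphism step (restrict $f$ to highest weight vectors, then use that they generate) is more self-contained than an appeal to Schur.
\end{itemize}
The paper's argument is shorter and does not use the $\sl(2,\setR)$-specific structure ($\rho(Y)^{r+1}w=0$, Verma modules). It applies verbatim whenever the real irreducible modules have commutant $\setR$.

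One small point is worth making explicit in your morphism step. When $V=\bigoplus_s V_s\otimes W_s$, the highest weight vectors of weight $r$ are exactly $v_r\otimes W_r$, because $\ker\rho_s(X)=V_s(s)$; the same holds for $V'$. This is what justifies defining $f_r$ by $f(v_r\otimes w)=v_r\otimes f_r(w)$.
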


\begin{proof}
    Since $\sl(2, \setR)$ is a semisimple Lie algebra, $V$ decomposes into isotypic components. 
	Let $V_r$ be an irreducible $\sl(2, \setR)$-module. The corresponding isotypic component $C_r$ of $V$ is a $\setK$-vector subspace. $C_r$ factorizes as $V_r\otimes_\setR W_r$ as a real $\sl(2, \setR)$-module. Furthermore, Schur's Lemma implies that the scalar action of $\setK$ that commutes with the action of $\sl(2, \setR)$ is naturally represented by real automorphisms of $W_r$, thereby defining a $\setK$-vector space structure on $W_r$.

    Finally, such a map $f$ maps the isotypic components of $V$ into isotypic components of $V'$ of the same type. Since the $V_r$ are irreducible, Schur's Lemma for $\setK$-linear $\sl(2, \setR)$-modules implies that $f$ decomposes into the given form.
\end{proof}
\begin{remark}
    A canonical choice for $W_r$ is given by $\Hom_{\sl(2, \setR) \hMod }(S_r, V)$, but this expression will not be of use to us.
\end{remark}%
We will call \enquote{multiplicity spaces} the spaces $W_r$.

In order to handle non-irreducible $\sl(2, \setR)$, we will use tensors products of bilinear forms.
\begin{proposition}\label{propno:TensorProduct}
    Let $W_1$ be a $\setR$-vector space equipped with a bilinear form $B_1$ and
    let $W_2$ be a $\setK$-vector space equipped with a bilinear (resp. sesquilinear~\footnote{Our convention is left-antilinear and right-linear.}) form $B_2$. 
    The tensor product $W_1\otimes_\setR W_2$ is a $\setK$-vector space, equipped with the tensor product bilinear (resp. sesquilinear) form $B_1\otimes B_2$ defined as follows on decomposable vectors:
    \[
        B_1\otimes B_2(a_1\otimes a_2, b_1\otimes b_2)
        := B_1(a_1, b_1)B_2(a_2, b_2).
    \]

    \begin{enumerate}
    	\item If $B_1 \neq 0$ then for any bilinear form $B'_2$ on $W_2$ such that $B_1\otimes B'_2 = B_1\otimes B_2$ it holds $B'_2 = B_2$.

        \item Assume that that $B_1, B_2\neq 0$. 
        $B_1\otimes B_2$ is $\setK$-bilinear (resp. sesquilinear) if and only $B_2$ is $\setK$-bilinear (resp. sesquilinear).

        In each case, the symmetry property of two among $B_1, B_2, B_1\otimes B_2$ implies a symmetry property on the third one, according to the following tables:
        \begin{gather*}
            \begin{NiceTabular}{|c|c|c|}
                \hline
                \multicolumn{3}{c}{$\setK$-bilinear case: symmetry of $B_1\otimes B_2$}
                 \\ \hline \
                 \diagbox{$B_1$}{$B_2$} & symmetric      & skew-symmetric\\ \hline
                 symmetric                 & symmetric     & skew-symmetric \\ \hline
                 skew-symmetric           & skew-symmetric  & symmetric \\ \hline
            \end{NiceTabular}
\\[1em]
            \begin{NiceTabular}{|c|c|c|}
                \hline
                \multicolumn{3}{c}{$\setK$-sesquilinear case: symmetry of $B_1\otimes B_2$}
                 \\ \hline \
                 \diagbox{$B_1$}{$B_2$} & Hermitian      & skew-Hermitian\\ \hline
                 symmetric              & Hermitian      & skew-Hermitian \\ \hline
                 skew-symmetric         & skew-Hermitian & Hermitian \\ \hline
            \end{NiceTabular}
        \end{gather*}
    \end{enumerate}
\end{proposition}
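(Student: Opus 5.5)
The plan is to reduce everything to bases. I would choose a real basis $(e_i)$ of $W_1$ and, viewing $W_2$ as a $\setK$-vector space, a $\setK$-basis $(f_j)$ of $W_2$. The $\setK$-structure on $W_1\otimes_\setR W_2$ is the one acting on the second factor, $\lambda(a_1\otimes a_2)=a_1\otimes \lambda a_2$. For $\setH$ I fix a side convention compatible with the sesquilinear convention, namely right scalar multiplication. With this structure, $W_1\otimes_\setR W_2\cong \bigoplus_i e_i\otimes W_2$ as $\setK$-spaces. The form $B_1\otimes B_2$ is first defined as a real bilinear form on $W_1\otimes_\setR W_2$: the map $(a_1,a_2,b_1,b_2)\mapsto B_1(a_1,b_1)B_2(a_2,b_2)$ is $\setR$-multilinear, so by the universal property of the tensor product it is well defined.

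For point 1, I would use that $B_1\neq 0$ to pick $a_1,b_1$ with $B_1(a_1,b_1)=c\neq 0$. Evaluating both tensor forms on $a_1\otimes a_2$ and $b_1\otimes b_2$ gives $cB'_2(a_2,b_2)=cB_2(a_2,b_2)$ for all $a_2,b_2$. Hence $B'_2=B_2$.

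For the first claim of point 2, the ``if'' direction follows because $\setK$-scalars only touch the second factor. For instance, in the sesquilinear case,
\[
B_1\otimes B_2(a_1\otimes a_2\lambda, b_1\otimes b_2\mu)=B_1(a_1,b_1)\bar\lambda B_2(a_2,b_2)\mu .
\]
Here the real number $B_1(a_1,b_1)$ is central in $\setK$, so it can be moved past $\bar\lambda$. The relation then extends to all tensors by additivity. For the ``only if'' direction, I would again choose $a_1,b_1$ with $B_1(a_1,b_1)=c\neq0$ and restrict the identity to decomposable vectors. Dividing by the real scalar $c$ then transfers (sesqui)linearity back to $B_2$.

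The symmetry tables come from one computation on decomposables:
\[
B_1\otimes B_2(b_1\otimes b_2,a_1\otimes a_2)=B_1(b_1,a_1)B_2(b_2,a_2)=\epsilon_1\epsilon_2\,B_1(a_1,b_1)B_2(a_2,b_2),
\]
where $\epsilon_i=\pm1$ records symmetry or skew-symmetry. In the sesquilinear case $B_2(b_2,a_2)=\epsilon_2\overline{B_2(a_2,b_2)}$, and conjugation commutes with multiplication by the real number $B_1(a_1,b_1)$. This gives $(B_1\otimes B_2)(v,w)=\epsilon_1\epsilon_2\overline{(B_1\otimes B_2)(w,v)}$ on decomposables, and by real bilinearity on everything. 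That yields every entry of the tables.

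The statement also claims that the symmetry types of any two of the forms determine the third. For this converse I would use the nondegeneracy-type choice once more: if $B_1\otimes B_2$ and $B_1$ have definite types, choose $a_1,b_1$ with $B_1(a_1,b_1)\neq0$ and compare the two sides to read off $\epsilon_2$. The case where the types of $B_1\otimes B_2$ and $B_2$ are known is handled symmetrically, using nonzero values of $B_2$.

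I expect the main obstacle to be the quaternionic case. There, care is needed that left and right scalar conventions match the sesquilinear convention, and that real scalars are central so they commute with quaternion conjugation and with products. I would handle this first by fixing the right $\setH$-module convention explicitly.
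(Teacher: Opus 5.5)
Your proof is correct and follows the paper's argument. The paper likewise picks $u,v\in W_1$ with $B_1(u,v)\neq 0$, recovers $B_2(x,y)=B_1\otimes B_2(u\otimes x,v\otimes y)/B_1(u,v)$, and reads off uniqueness, (sesqui)linearity and the symmetry types from this formula; you additionally make explicit what the paper calls immediate, including reading off the type of $B_1$ from those of $B_2$ and $B_1\otimes B_2$ by cancelling a nonzero value of $B_2$.
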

\begin{proof}
	Since $B_1 \neq 0$, there exist $u,v\in W_1$ such that $B_1(u_1, v_1)\neq 0$. This gives the following expression for $B_2$:
	    \begin{equation}\label{eqno:B1B2/B1}
	        B_2(x,y) = \frac{B_1\otimes B_2(u\otimes x, v\otimes y)}{B_1(u,v)}
	    .\end{equation}
	Therefore, $B_2$ is uniquely characterized by $B_1\otimes B_2$ and $B_1$.
    
	Moreover, Equation~\eqref{eqno:B1B2/B1} implies immediately that $B_2$ and $B_1\otimes B_2$ have the same $\setK$-bilinearity/ sesquilinearity. The symmetry properties are also immediate.
\end{proof}

The following Proposition will be our main tool to characterize nilpotent adjoint orbits of simple Lie groups.

\begin{proposition}\label{propno:sl2moduleIsometry}
    Let $V$ be a finite-dimensional $\setK$-linear $\sl(2, \setR)$-module equipped with an invariant bilinear (resp. sesquilinear) form $B$.
    \begin{enumerate}
    \item 
    There is an isomorphism of $\sl(2, \setR)$-modules
    \[
        V \simeq \bigoplus_r V_r \otimes W_r
    \]
    with $V_r$ the fixed irreducible $\sl(2, \setR)$-module of dimension $r+1$ and $\setK$-vector spaces $W_r$.
    Under this isomorphism, $B$ takes the form
    \[
        B \simeq \bigoplus B_r \otimes \phi_r
    \]
    with $B_r$ the fixed invariant bilinear form on $V_r$ and $\phi_r$ an arbitrary bilinear (resp. sesquilinear) form on $W_r$.

    \item     
    Let $V'$ be another $\setK$-linear $\sl(2, \setR)$-module equipped with an invariant bilinear (resp. sesquilinear) form $B'$, of the form
    \[
    	V' \simeq \bigoplus_r (V_r\otimes W'_r, \, B_r \otimes \phi'_r)
	.\]
	Then any $\setK$-linear map of $\sl(2, \setR)$-modules $f:V\to V'$ such that $u^*B' = B$ takes the form
    \[
    	f \simeq \bigoplus_r \id_{V_r} \otimes f_r
    \]
    with $f_r\in \Hom_{\setK \hVect}(W_r, W'_r)$ such that $f_r^*\phi'_r = \phi_r$.
    \end{enumerate}
\end{proposition}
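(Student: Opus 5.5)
The plan is to reduce everything to Proposition~\ref{propno:sl2modKSpace} and then use the one-dimensionality of invariant forms on irreducibles (Proposition~\ref{propno:InvariantProductssl2modules}), together with Proposition~\ref{propno:TensorProduct}, to identify the form on each isotypic block.

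For the first part, we start from Proposition~\ref{propno:sl2modKSpace}, which gives a $\setK$-linear isomorphism $V\simeq\bigoplus_r V_r\otimes W_r$. We first show that isotypic components of different type are $B$-orthogonal. The form $B$ restricted to $(V_r\otimes W_r)\times(V_s\otimes W_s)$ is an invariant pairing. Equivalently, it is an $\sl(2,\setR)$-equivariant map $V_r\otimes W_r\to (V_s\otimes W_s)^*$. Since $V_s^*\simeq V_s$, the target is isotypic of type $s$, so the map vanishes for $r\neq s$ by Schur's lemma. Alternatively, one can use the weight argument from the proof of Proposition~\ref{propno:InvariantProductssl2modules} applied to highest weight vectors.

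Next, on a single block we fix $w,w'\in W_r$ and consider the real bilinear form $(a,b)\mapsto B(a\otimes w,b\otimes w')$ on $V_r$. This form is $\sl(2,\setR)$-invariant, since the action is on the first factor only. By Proposition~\ref{propno:InvariantProductssl2modules} it is therefore a real multiple of $B_r$, and we call this multiple $\phi_r(w,w')$. We then check that $\phi_r$ is bilinear (resp. sesquilinear) over $\setK$. Real bilinearity is immediate from the bilinearity of $B$. For the $\setK$-structure, we use that $\setK$ acts on $V_r\otimes W_r$ through $W_r$ and then apply formula~\eqref{eqno:B1B2/B1} together with Proposition~\ref{propno:TensorProduct}(2). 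Because decomposable tensors span, $B=B_r\otimes\phi_r$ on each block, and by orthogonality $B=\bigoplus_r B_r\otimes\phi_r$.

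For the second part, Proposition~\ref{propno:sl2modKSpace} already gives $f=\bigoplus_r\id_{V_r}\otimes f_r$ with $f_r$ $\setK$-linear. It remains to translate the condition $f^*B'=B$. Using the orthogonal decompositions, $f^*B'=\bigoplus_r B_r\otimes f_r^*\phi'_r$. Comparing this with $B=\bigoplus_r B_r\otimes\phi_r$ block by block, and applying the uniqueness statement of Proposition~\ref{propno:TensorProduct}(1) with $B_r\neq0$, gives $f_r^*\phi'_r=\phi_r$. Note that the statement's $u^*B'=B$ should be read as $f^*B'=B$.

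The main obstacle I expect is the $\setK$-compatibility when $\setK=\setH$ or in the sesquilinear case. There I must make sure that the scalar obtained from the real invariant form on $V_r$ really defines a $\setK$-valued form with the correct linearity. The subtlety is that the invariant-form space is one-dimensional over $\setR$, while $B$ is $\setK$-valued. To handle this, I would decompose $B$ into real components along a real basis of $\setK$ ($1,i$ or $1,i,j,k$), apply the real argument to each component to obtain real forms $\phi_r^{(\alpha)}$, and recombine them into $\phi_r$. The linearity properties of $\phi_r$ then follow from those of $B$ via \eqref{eqno:B1B2/B1}.
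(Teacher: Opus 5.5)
Your proposal is correct and follows essentially the same route as the paper. It uses the isotypic decomposition from Proposition~\ref{propno:sl2modKSpace}, $B$-orthogonality of distinct isotypic components via self-duality of the $V_r$ and Schur's lemma, and the factorization $B_r\otimes\phi_r$ on each block from uniqueness of invariant forms on $V_r$, then Proposition~\ref{propno:TensorProduct}(1) to get $f_r^*\phi'_r=\phi_r$. Your construction of $\phi_r(w,w')$ by fixing $w,w'$ and splitting the $\setK$-valued form into real components just unpacks the paper's Schur argument on $V_r\otimes W_r\to V_r^*\otimes W_r^*$ more carefully, and you are right that $u^*B'=B$ in the statement should read $f^*B'=B$.
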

The argument is a reformulation of~\cite{CollingwoodMcGovern}.
\begin{proof}
    For simplicity, we write the proof only for the case where $B$ is 
    sesquilinear; 
    the other cases are essentially identical.
    Since $\sl(2, \setR)$ is semisimple, $V$ decomposes into isotypic components
    \[
        V \simeq_{\sl(2, \setR)} \bigoplus_r V_r\otimes W_r 
    \]
    with $\sl(2, \setR)$ acting only on the $V_r$ factors, the action on $W_r$ being trivial.
    Since the $V_r$ are self-dual $\sl(2, \setR)$-modules, the different isotypic components are pairwise $B$-orthogonal. Therefore, $B$ decomposes as a direct sum $B \simeq \bigoplus_r \widetilde{B}_r$ with $\widetilde{B}_r$ a sesquilinear form on $V_r\otimes W_r$.

    Furthermore, Schur's Lemma implies that the invariant form $\widetilde{B}_r$ takes the form $B_r\otimes \phi_r$ with $B_r$ an invariant bilinear form on $V_r$ and $\phi_r$ an arbitrary sesquilinear form on $W_r$. 
    Indeed, $\widetilde{B}_r$ can be interpreted as a $\sl(2, \setR)$-equivariant $\setK$-antilinear map $V_r\otimes W_r \to V_r^*\otimes W_r^*$. Since $V_r$ and $V_r^*$ are isomorphic (real) irreducible $\sl(2, \setR)$-modules, Schur's Lemma implies that all such maps are of form $B_r \otimes \phi_r$ with $\phi_r$ a sesquilinear form on $W_r$.
    
    Finally, let $f\in \Hom_{\sl(2, \setR) \hMod} (V, V')$ such that $f^* B' = B$. According to Schur's Lemma (Proposition~\ref{propno:sl2modKSpace}), $f$ takes the form
    \[
    	f \simeq \bigoplus_r \id_{V_r} \otimes f_r
    \]
    with $f_r: W_r \to W'_r$. Since $f$ maps $B$ to $B'$, necessarily, $\id_{V_r}\otimes f_r$ maps $B_r \otimes \phi_r$ to $B_r\otimes \phi'_r$. Thus
    \[
    	(\id_{V_r}\otimes f_r)^* (B_r \otimes \phi'_r)
    	= 
    	(\id_{V_r}^* B_r)\otimes (f_r^* \phi'_r)
     	= 
     	B_r \otimes (f_r^* \phi'_r)
    .\]
    Since $B_r$ is nonzero, Proposition~\ref{propno:TensorProduct} implies that $f_r^*\phi'_r = \phi_r$.
\end{proof}

Using the maps $u_r$ introduced in Proposition~\ref{propno:x->-x}, we can now compare in generality the adjoint orbits of a nilpotent element $x$ with that of $-x$.

\begin{theorem}\label{thmno:IsometryConjugacy}
	Let $V$ be a $\setK$-vector space with $\setK \in \{\setR, \setC, \setH\}$. 
	Assume that $V$ is equipped with a non-degenerate bilinear symmetric, symplectic, Hermitian, or skew-Hermitian form $B$. Define the group
	\[
		G = \left\{
			g \in \GL_\setK(V) \, | \,
			g^*B = B
		\right\}
	\]
	and call its Lie algebra $\Glie$.
	
\begin{enumerate}
    \item $G$ is semisimple and thus every nonzero nilpotent element belongs to a standard triple.

	\item 
	Let $(x,y,h)$ be a standard triple in $\Glie$. It defines an action of $\sl(2, \setR)$ on $V$ through the embedding $\sl(2, \setR) \hookrightarrow \Glie$. Consider the isotypic decomposition of $V$. Then the $\setK$-linear isometry classes of $(W_r, \phi_r)$ depend only on $x$ (and not on the choice of $y$ and $h$).
	
	\item Two nilpotent elements in $\Glie$ are $G$-conjugate if and only if for every $r$ they have $\setK$-linearly isometric multiplicity spaces $(W_r, \phi_r)$.

	\item Any nonzero nilpotent element $x$ is $G$-conjugate to $-x$ if and only if for every odd $r$, $(W_r, \phi_r)$ is $\setK$-linearly isometric to $(W_r, -\phi_r)$.
\end{enumerate}	
\end{theorem}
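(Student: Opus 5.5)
The plan is to treat the four items in order, reducing everything to Proposition~\ref{propno:sl2moduleIsometry} and the conjugation results of Section~\ref{secno:AdjointOrbits}.

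For item~1, I would not reprove semisimplicity from scratch. Each such $G$ is one of the classical groups $\O(p,q)$, $\Symp(2n,\setR)$, $\UU(p,q)$, $\Symp(p,q)$, $\SO^*(2n)$, their complex analogues, and so on. These are reductive, and their Lie algebras are semisimple except in the low-dimensional abelian exceptions such as $\UU(p,q)$, where one passes to the derived algebra. In every case the nilpotent elements lie in the semisimple part $[\Glie,\Glie]$, so Theorem~\ref{thmno:JacobsonMorosov} applies to them. I would phrase this as an appeal to the standard classification and state the caveat about centers explicitly.

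For item~2, I would start from two triples $(x,y,h)$ and $(x,y',h')$. Theorem~\ref{thmno:KostantConjugacy} gives some $g\in Z_G(x)$ with $g\cdot(x,y,h)=(x,y',h')$. There is a technical point here: that theorem is stated for the adjoint group, so I would either apply it to the identity component of $G$ or note that the conjugating element can be lifted to $G$. The element $g$ is a $\setK$-linear $B$-isometry of $V$ that intertwines the two $\sl(2,\setR)$-actions. Item~2 of Proposition~\ref{propno:sl2moduleIsometry} then shows that $g=\bigoplus_r \id_{V_r}\otimes g_r$ with $g_r^*\phi'_r=\phi_r$. So the isometry classes of the $(W_r,\phi_r)$ do not depend on the choice of $y$ and $h$.

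For item~3, the \enquote{only if} direction is the same argument as item~2: if $gxg^{-1}=x'$, transport the triple of $x$ by $g$ and use independence of the triple. For the \enquote{if} direction, suppose isometries $f_r:(W_r,\phi_r)\to(W'_r,\phi'_r)$ are given. Set $f=\bigoplus_r\id_{V_r}\otimes f_r$ under the decompositions of item~1 of Proposition~\ref{propno:sl2moduleIsometry}. Then $f$ is a $\setK$-linear bijection with $f^*B=B$, so $f\in G$, and $f$ intertwines the two $\sl(2,\setR)$-actions. In particular $f\rho(x)f^{-1}=\rho'(x')$, that is, $fxf^{-1}=x'$ as elements of $\Glie\subset\End_\setK(V)$.

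For item~4, note that $(-x,-y,h)$ is again a standard triple. I would compute its multiplicity data by twisting the decomposition with $u=\bigoplus_r u_r\otimes\id_{W_r}$ from Proposition~\ref{propno:x->-x}. The map $u$ is $\setK$-linear and conjugates the action of $(x,y,h)$ into that of $(-x,-y,h)$. Its pullback satisfies $u^*\big(B_r\otimes\phi_r\big)=(-1)^rB_r\otimes\phi_r$. Hence, as a module for the triple $(-x,-y,h)$, $V$ is isomorphic via $u$ to $\bigoplus_r V_r\otimes W_r$ with forms $B_r\otimes(-1)^r\phi_r$. So the multiplicity spaces of $-x$ are $(W_r,(-1)^r\phi_r)$. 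Item~3 then says that $x$ and $-x$ are conjugate exactly when $(W_r,\phi_r)\cong(W_r,-\phi_r)$ for every odd $r$; for even $r$ the condition is automatic.

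The step I expect to be the main obstacle is the group-theoretic bookkeeping. In items~1 and~2 one has to pass between $G$ (possibly disconnected, possibly with center) and the adjoint group of Theorem~\ref{thmno:KostantConjugacy}. I would handle this by using the version of Kostant's theorem for the group $G$ acting on $V$ directly, as in~\cite{CollingwoodMcGovern}.
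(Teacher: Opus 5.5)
Your proposal is correct and follows essentially the same route as the paper. Items 2--3 come from Kostant's conjugacy theorem plus Proposition~\ref{propno:sl2moduleIsometry}, and item 4 comes from twisting by $\bigoplus_r u_r\otimes\id_{W_r}$ to see that $-x$ has multiplicity spaces $(W_r,(-1)^r\phi_r)$. Several of your additions fill in points the paper glosses over: building the conjugating isometry $\bigoplus_r \id_{V_r}\otimes f_r$ directly in item 3, lifting Kostant's conjugating element from the adjoint group to $G$, and noting that $\Glie$ may have a center. On that last point, the center of $\mathfrak{u}(p,q)$ is always present, not only in a low-dimensional exception.
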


\begin{proof}
~\begin{enumerate}[leftmargin=*]
	\item 
	First, note that the Lie group $G$ is semisimple (see any reference on the classical Lie groups such as~\cite{Helgason}).
	Therefore, Theorem~\ref{thmno:JacobsonMorosov} implies that every nonzero nilpotent element of $\Glie$ belongs to a standard triple. 
    
    \item 
    Theorem~\ref{thmno:KostantConjugacy} implies that two standard triples $(x,y,h)$ and $(x,y',h')$ in $\Glie$ are $G$-conjugate. According to Proposition~\ref{propno:sl2moduleIsometry}, this implies that the isometry classes of $(W_r, \phi_r)$ are the same for the two triples and thus only depend on $x$.
	
	\item 
	Second, the same argument, along with Corollary~\ref{corno:TriplesConjugate} implies that two nonzero nilpotent elements are conjugate if and only if they have the same isometry classes of $(W_r, \phi_r)$.
	
	\item We shall be precise and write $\rho_r$ for the representation of $\sl(2, \setR)$ on $V_r$.
	Let $(x,y,h)$ be a standard triple in $\Glie$: the corresponding action $\rho$ of $\sl(2, \setR)$ on $V$ satisfies
	\begin{equation*}
		\rho(X) = x, \qquad\qquad
		\rho(Y) = y, \qquad\qquad
		\rho(H) = h
	.\end{equation*}
	Consider the isotypic decomposition
	\[
		(V, \rho) \simeq_{\sl(2, \setR)} \bigoplus_r (V_r, \rho_r) \otimes_\setR W_r
	\]
	with each $W_r$ a $\setK$-linear vector space equipped with a bilinear form $\phi_r$.
	
	Consider now the standard triple $(-x,-y,h)$ and the associated action $\tilde \rho$ of $\sl(2, \setR)$ on $V$:
	\begin{equation*}
		\tilde\rho(X) = -x, \qquad\qquad
		\tilde\rho(Y) = -y, \qquad\qquad
		\tilde\rho(H) = h
	.\end{equation*}
	If we define, in a similar way, alternative representations $\tilde\rho_r$ on each space $V_r$, we obtain an alternative isotypic decomposition
	\[
		(V, \tilde\rho) \simeq_{\sl(2, \setR)} \bigoplus_r (V_r, \tilde \rho_r) \otimes_\setR W_r
	.\]
	We are looking for the isotypic decomposition of $(V, \tilde\rho)$ \emph{in terms of the representatives $(V_r, \rho_r)$}.
	
	In Proposition~\ref{propno:x->-x}, we have constructed for every $r$ a map $u_r: V_r \to V_r$ such that 
	\[
		u_r
			\big(\rho_r(x), \rho_r(y), \rho_r(h)
			\big) u_r^{-1}
		= \big( 
			-\rho_r(x), -\rho_r(y), \rho_r(h) \big)
		= \big(
			\tilde\rho_r(x), \tilde\rho_r(y), \tilde\rho_r(h) \big)
	\]
	and $u_r^*B_r = (-1)^r B_r$. Therefore, we can construct homomorphisms of $\sl(2, \setR)$-modules
	\[
		(V_r, \rho_r) \otimes_\setR W_r 
			\xrightarrow[\sim]{u_r \otimes \id_{W_r}}
		(V_r, \tilde\rho_r) \otimes_\setR W_r
	\]
	with $(u_r^* B_r)\otimes \phi_r = (-1)^r B_r \otimes \phi_r = B_r \otimes (-1)^r \phi_r$. A direct sum gives an isomorphism of $\sl(2, \setR)$-modules that is compatible with the bilinear forms:
	\begin{equation*}
		\bigoplus_r 
			\Big( 
			(V_r, \tilde \rho_r) \otimes_\setR W_r
			, \,
			B_r\otimes (-1)^r \phi_r 
			\Big)
		\simeq_{\sl(2, \setR)}
			\Big( 
			(V_r, \rho_r) \otimes_\setR W_r
			, \,
			B_r\otimes \phi_r 
			\Big)
		\simeq \Big((V, \tilde\rho), B \Big)		
	.\end{equation*}
	We have proven that the multiplicity spaces associated with $-x$ have isometry classes $(W_r, (-1)^r\phi_r)$. Proposition~\ref{propno:sl2moduleIsometry} allows us to conclude that $x$ and $-x$ are $G$-conjugate if and only if for every $r$ the bilinear form $\phi_r$ is isometric to $(-1)^r \phi_r$, namely, if and only if for every odd $r$ the bilinear $\phi_r$ is isometric to $-\phi_r$.
\end{enumerate}
\end{proof}

\section{Nilpotent orbits of the classical Lie algebras}
\label{secno:ClassicalLieAlgebras}

With the results from Section~\ref{secno:InvariantProducts}, we can identify which nilpotent orbits of the classical Lie groups have opposed points.
The different classical algebras are handled in a mostly uniform fashion, using their standard representation, according to the approach from~\cite{CollingwoodMcGovern}. While their classification is given in the combinatorial language of signed tableaux, it is more practical for us to use the (equivalent) signatures of the multiplicity spaces $(W_i, \phi_i)$.

Recall that the adjoint orbits of Lie algebras are the orbits under the action of \emph{connected} Lie groups on their tangent Lie algebras.
Lie algebras with similar considerations will be presented next to each other, with the complicated case of $\so(p,q, \setR)$ in last.

\subsection{$\sl(n, \setR)$}

The case of $\sl(n, \setR)$ is handled in \cite{SLnGeneralizedTemperature}, which we cite here:
\begin{proposition}
	Let $x$ be a nilpotent element in $\sl(n, \setR)$.
    Then $x$ and $-x$ belong to different $\sl(n, \setR)$-orbits if and only if all Jordan blocks of $x$ have a dimension $n$ such that $n \equiv 2 \mod 4$.

    When $x$ has a Jordan block of dimension $\not\equiv 2 \mod 4$, its adjoint orbit admits no Gibbs state.
\end{proposition}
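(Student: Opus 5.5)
The plan is to use the machinery of Section~\ref{secno:InvariantProducts} in the degenerate case where there is no invariant form, so that only the determinant has to be tracked. Let $(x,y,h)$ be a standard triple in $\sl(n,\setR)$ (Theorem~\ref{thmno:JacobsonMorosov}). The standard representation then decomposes as $\setR^n\simeq\bigoplus_r V_r\otimes W_r$ by Proposition~\ref{propno:sl2modKSpace}. Here $m_r:=\dim W_r$ is the number of Jordan blocks of $x$ of size $r+1$.

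\textbf{Step 1: conjugacy under $\GL(n,\setR)$.} The map $U:=\bigoplus_r u_r\otimes\id_{W_r}$, with $u_r$ from Proposition~\ref{propno:x->-x}, conjugates $(x,y,h)$ to $(-x,-y,h)$. By Corollary~\ref{corno:TriplesConjugate}, applied in $\GL(n,\setR)$, and by Proposition~\ref{propno:sl2modKSpace}, every element conjugating $x$ to $-x$ can be moved so that it conjugates the triples. Such an element then has the form $U\circ\bigoplus_r(\id_{V_r}\otimes f_r)$ with $f_r\in\GL(W_r)$. So the only question is whether one of these elements has positive determinant. Its image in the adjoint group lies in the identity component exactly when it is a multiple of an element of $\SL(n,\setR)$. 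For odd $n$ one can always rescale by $-1$ to fix the sign, and the statement is consistent with this because every partition of odd $n$ has an odd block.

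\textbf{Step 2: determinant computation.} First, $\det u_r=(-1)^{\sum_i i}=(-1)^{r(r+1)/2}$, and $\det(u_r\otimes\id_{W_r})=(\det u_r)^{m_r}$. Second, $\det(\id_{V_r}\otimes f_r)=(\det f_r)^{r+1}$. If some block has odd size $r+1$, we can choose $f_r$ with $\det f_r<0$ and thereby flip the total sign. Hence $x\sim -x$ whenever some Jordan block has odd size. If all blocks have even size, every factor $(\det f_r)^{r+1}$ is positive, and the sign is the invariant $\prod_r(-1)^{m_r\,r(r+1)/2}$. Now $r(r+1)/2$ is even when $r+1\equiv0\bmod 4$ and odd when $r+1\equiv 2\bmod 4$. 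So blocks of size $\equiv 0\bmod 4$ never obstruct, and the sign equals $(-1)^{N}$, where $N$ is the total number of blocks of size $\equiv 2\bmod 4$.

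\textbf{Step 3: matching with the statement.} Step~2 shows that the presence of any block of size $\not\equiv 2\bmod 4$ yields the required correction: an odd-size block allows the sign flip, and a block of size $\equiv0\bmod4$ is harmless. This is exactly the direction ``if $x$ has a Jordan block of dimension $\not\equiv 2 \bmod 4$, then $x\sim -x$''. The second sentence of the statement then follows directly from Proposition~\ref{propno:NoGenTemp}.

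\textbf{Main obstacle: the converse.} For the converse, Step~2 only yields non-conjugacy when $N$ is odd. I expect this to be the critical point. When all blocks have size $\equiv 2\bmod 4$ but $N$ is even, for instance two blocks of size $2$, the element $\operatorname{diag}(1,-1,1,-1)\in\SL(4,\setR)$ conjugates $x$ to $-x$. In that case the statement as worded would need the block count to be odd, so I would have to verify the exact convention in \cite{SLnGeneralizedTemperature} before asserting this direction.
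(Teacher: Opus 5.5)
\textbf{Gap in Step~3.} Your Steps~1--2 are sound. They are the natural $\GL(n,\setR)$-plus-determinant version of the method the paper uses for $\sl(n,\setH)$ and $\su(p,q)$; the paper itself gives no argument for this statement, only the citation to \cite{SLnGeneralizedTemperature}. Your example $\operatorname{diag}(1,-1,1,-1)\in\SL(4,\setR)$ correctly shows that two blocks of size $2$ give $x\sim -x$.

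The problem is Step~3. A block of size $r+1\equiv 0\pmod 4$ is \enquote{harmless} only in the sense that $\det u_r=+1$. It does not provide a sign flip, because $(\det f_r)^{r+1}>0$ for every $f_r\in\GL(W_r)$ when $r+1$ is even. Only odd-size blocks can correct the sign.

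Your own Step~2 then refutes the implication you claim to prove. Take $x=J_2\oplus J_4\in\sl(6,\setR)$, with nilpotent Jordan blocks of sizes $2$ and $4$. By your Step~1 reduction, every $g\in\GL(6,\setR)$ with $gxg^{-1}=-x$ lies in $U\cdot Z_{\GL(6,\setR)}(x)$. All blocks are even, so $\det$ is positive on that centralizer. Hence $\det g$ has the sign of $\det U=\det u_1\cdot\det u_3=(-1)(+1)=-1$. Since $n=6$ is even, no real rescaling changes that sign. So $x$ and $-x$ lie in different $\SL(6,\setR)$-orbits, although $x$ has a block of size $4\not\equiv 2\pmod 4$. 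The direction you say you have proved is therefore false as well, not only the converse you flagged.

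\textbf{What Step~2 actually proves.} It gives the corrected criterion: $x$ and $-x$ lie in different adjoint orbits if and only if every Jordan block of $x$ has even size and the number $N$ of blocks of size $\equiv 2\pmod 4$ is odd. In the paper's notation, $n_r=0$ for all even $r$ and $\sum_k n_{4k+1}$ is odd.

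The proposition as worded, and the $\sl(n,\setR)$ row of Table~\ref{tableno:Conditions}, fail in both directions: $J_2\oplus J_2$ breaks one, $J_2\oplus J_4$ the other. Your tentative repair (an odd block count) is also too narrow, since blocks of size $\equiv 0\pmod 4$ can be present in the non-conjugate case. You should state and prove the corrected criterion rather than one half of the original wording.

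\textbf{Gibbs-state sentence.} Proposition~\ref{propno:NoGenTemp} yields the absence of Gibbs states only when $x\sim -x$, that is, when some block has odd size or $N$ is even. For orbits like that of $J_2\oplus J_4$, which the last sentence of the statement also covers, the negation argument does not apply, and a separate argument would be needed.
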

Using the highest weights $r=n-1$ associated to standard triples as in Section~\ref{secno:InvariantProducts}, the condition becomes $r \equiv 1 \mod 4$.

\subsection{$\symp(p,q)$}

The group $\Symp(p,q)$ is the isotropy group of a Hermitian product $B$ of signature $(p,q)$ on $\setH^{p+q}$. It is connected.
Let us recall the law of inertia (skew-)Hermitian products on quaternionic vector spaces~\cite{QuaternionLinearAlgebra}:
\begin{proposition}\label{propno:QuaternionHermitian}
	Non-degenerate Hermitian products on $\setH^n$ are characterized by a signature $(p,q)$ with $p+q = n$. 
	Non-degenerate skew-Hermitian products on $\setH^n$ are all equivalent.
\end{proposition}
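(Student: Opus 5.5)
We prove the two parts of Proposition~\ref{propno:QuaternionHermitian} separately. Throughout we use the convention that $B$ is left-antilinear and right-linear, and that a quaternionic Hermitian form satisfies $B(v,u)=\overline{B(u,v)}$. A skew-Hermitian form instead satisfies $B(v,u)=-\overline{B(u,v)}$.

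\textbf{Hermitian case.} The key observation is that $B(v,v)$ is real for every $v$, since Hermitian symmetry forces $B(v,v)=\overline{B(v,v)}$.
\begin{enumerate}
\item By polarization, a nonzero Hermitian form has some $v$ with $B(v,v)\neq 0$. If $B(v,v)=0$ for all $v$, then $\operatorname{Re}B(u,w)=0$ for all $u,w$. Replacing $w$ by $wq$ for suitable $q\in\setH$ then kills every imaginary part as well, so $B=0$.
\item Rescale $v$ by a real factor so that $B(v,v)=\pm1$. Note that $B(vq,vq)=|q|^2B(v,v)$, so only the sign is an invariant of the line $v\setH$.
\item Split off the orthogonal complement $v^\perp=\{w : B(v,w)=0\}$. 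This is a right $\setH$-subspace, and $V=v\setH\oplus v^\perp$ by non-degeneracy on the line.
\item Induct on the dimension. This produces an orthonormal basis with $p$ vectors of norm $+1$ and $q$ of norm $-1$.
\end{enumerate}
Uniqueness of $(p,q)$ follows as in Sylvester's law. The integer $p$ is the maximal quaternionic dimension of a positive-definite subspace: a positive subspace meets the span of the negative basis vectors trivially, which bounds its dimension by $p$.

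\textbf{Skew-Hermitian case.} Here $B(v,v)$ is purely imaginary, since $B(v,v)=-\overline{B(v,v)}$. The steps are as follows.
\begin{enumerate}
\item Find $v$ with $B(v,v)\neq0$, again by polarization. If $B(v,v)=0$ for all $v$, then $B(u,w)=\overline{B(w,u)}$. Combining this with skew-Hermitian symmetry and scaling $w\mapsto wq$ forces $B=0$.
\item Under $v\mapsto vq$ we get $B(vq,vq)=\bar q\,B(v,v)\,q$. Conjugation by unit quaternions acts transitively on the unit sphere of imaginary quaternions, and real rescaling adjusts the norm. Hence we may normalize $B(v,v)=\mathbf{i}$.
\item Split off $v^\perp$ exactly as in the Hermitian case and induct on the dimension.
\end{enumerate}
This yields a basis in which $B$ is diagonal with all entries equal to $\mathbf{i}$. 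Consequently any two non-degenerate skew-Hermitian forms of the same dimension are equivalent.

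\textbf{Main obstacle.} The delicate step is step~2 of the skew-Hermitian case. In contrast to the complex case, no sign invariant survives there: a nonzero imaginary quaternion $a$ has the same square norm as $-a$ and lies in the same conjugation orbit. Concretely, $\bar{\mathbf{j}}\,\mathbf{i}\,\mathbf{j}=-\mathbf{i}$. This is exactly what makes all such forms equivalent, so the normalization must be checked carefully with the chosen antilinearity convention.
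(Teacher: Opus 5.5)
Your proof is correct. The paper gives no argument for this statement at all: it quotes the law of inertia for quaternionic (skew-)Hermitian forms from the literature on quaternionic linear algebra.

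Your proof takes the standard self-contained route. You find an anisotropic vector, normalize it, split off its orthogonal complement, and induct. Uniqueness of $(p,q)$ then comes from Sylvester's maximal-positive-subspace argument. The only ingredient beyond the real/complex setting is your step~2 of the skew-Hermitian case. There, $q\mapsto \bar q a q$ acts on nonzero imaginary quaternions by positive rescalings combined with $\SO(3)$-rotations, hence transitively.

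This buys something the citation does not: it makes the paper's later use of the statement explicit. In Theorem~\ref{thmno:NegationSpqn} the paper needs a non-degenerate skew-Hermitian $\phi_r$ to be isometric to $-\phi_r$. In your normal form, the right $\setH$-linear map $e_k\mapsto e_k\mathbf{j}$ sends $B(e_k,e_l)=\mathbf{i}\,\delta_{kl}$ to $\bar{\mathbf{j}}\,\mathbf{i}\,\mathbf{j}\,\delta_{kl}=-\mathbf{i}\,\delta_{kl}$. The citation, on the other hand, only buys brevity.

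There is one small slip, in step~1 of the skew-Hermitian case. If $B(v,v)=0$ for all $v$, polarization gives $B(u,w)=-B(w,u)$. Combined with skew-Hermitian symmetry $B(w,u)=-\overline{B(u,w)}$, this yields $B(u,w)=\overline{B(u,w)}$, i.e.\ $B$ is real-valued. It does not yield $B(u,w)=\overline{B(w,u)}$ as you wrote; that identity holds only a posteriori, once $B=0$. The conclusion then follows as you intended: $B(u,w\mathbf{i})=B(u,w)\,\mathbf{i}$ must also be real, which forces $B=0$.
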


Let $(x, y, h)$ be a standard triple in $\symp(p,q)$. $\setH^{p+q}$ has a corresponding isotypic decomposition
\[
	(\setH^{p+q}, B) \simeq \bigoplus_r (V_r \otimes_\setR W_r, B_r \otimes \phi_r)
\]
with $W_r$ a quaternionic vector space. According to Proposition~\ref{propno:TensorProduct}:
\begin{itemize}
	\item if $r$ is odd, then $\phi_r$ is a non-degenerate skew-Hermitian product, and the isometry class of $(W_r, \phi_r)$ is characterized by its dimension;
	\item if $r$ is even, then $\phi_r$ is a non-degenerate Hermitian product, and the isometry class of $(W_r, \phi_r)$ is characterized by its signature.
\end{itemize}

\begin{theorem}\label{thmno:NegationSpqn}
	Any nilpotent orbit of $\symp(p,q)$ is stable under negation.
	
	In particular, no nonzero nilpotent orbit admits Gibbs states.
\end{theorem}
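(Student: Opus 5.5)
The plan is to apply Theorem~\ref{thmno:IsometryConjugacy}(4) with $\setK = \setH$ and $B$ the Hermitian form of signature $(p,q)$ on $\setH^{p+q}$, whose isometry group is $\Symp(p,q)$. First I would note that $\Symp(p,q)$ is connected, so the orbits of the isometry group $G$ in Theorem~\ref{thmno:IsometryConjugacy} are exactly the adjoint orbits. Then the question of stability under negation reduces to a condition on the multiplicity spaces $(W_r, \phi_r)$ with $r$ odd.

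Let $x$ be a nonzero nilpotent element (the zero orbit is trivially stable). Pick a standard triple containing it by Theorem~\ref{thmno:IsometryConjugacy}(1), and decompose $(\setH^{p+q}, B)$ as in Proposition~\ref{propno:sl2moduleIsometry}. For odd $r$, the form $B_r$ is symplectic by Proposition~\ref{propno:InvariantProductssl2modules}. Since $B$ is Hermitian, the sesquilinear table of Proposition~\ref{propno:TensorProduct} forces $\phi_r$ to be skew-Hermitian. It is also non-degenerate, because $B$ is non-degenerate and the decomposition is $B$-orthogonal.

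Next, $-\phi_r$ is again a non-degenerate skew-Hermitian form on the same quaternionic space $W_r$. By Proposition~\ref{propno:QuaternionHermitian}, all non-degenerate skew-Hermitian products on $\setH^n$ are equivalent. Hence $(W_r,\phi_r)$ is $\setH$-linearly isometric to $(W_r,-\phi_r)$ for every odd $r$. Theorem~\ref{thmno:IsometryConjugacy}(4) then gives that $x$ and $-x$ are $G$-conjugate, so the nilpotent orbit is stable under negation. The statement about Gibbs states follows directly from Proposition~\ref{propno:NoGenTemp}.

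The only delicate point is the convention check in the second step. One must confirm that the tensor of a real skew-symmetric form with a quaternionic skew-Hermitian form is Hermitian for the paper's left-antilinear convention. One must also confirm that Proposition~\ref{propno:QuaternionHermitian} is applied to right (or left) $\setH$-vector spaces consistently with how $\setH$ acts on $W_r$. I would verify this directly on a decomposable vector using the formula $\overline{B_1(b,a)\phi(w,v)} = -B_1(a,b)\,\overline{\phi(w,v)} = B_1(a,b)\phi(v,w)$. Beyond this, the argument is a direct application of the earlier results.
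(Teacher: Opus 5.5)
Your proposal is correct and follows the paper's proof essentially verbatim. For odd $r$ the multiplicity form $\phi_r$ is a non-degenerate quaternionic skew-Hermitian form, hence isometric to $-\phi_r$ by Proposition~\ref{propno:QuaternionHermitian}. Theorem~\ref{thmno:IsometryConjugacy} then makes $x$ and $-x$ conjugate, and Proposition~\ref{propno:NoGenTemp} excludes Gibbs states. Your extra remarks only make explicit what the paper leaves implicit: connectedness of $\Symp(p,q)$ identifies $G$-orbits with adjoint orbits, and you check the sign rule that a skew-symmetric form tensored with a skew-Hermitian one is Hermitian.
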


\begin{proof}
	Let $(x,y,h)$ be a standard triple in $\symp(p,q)$ and $(W_r, \phi_r)$ the associated multiplicity spaces.
	For every odd $r$, $\phi_r$ is non-degenerate and skew-Hermitian, therefore, it is isometric to $-\phi_r$. 
	According to Theorem~\ref{thmno:IsometryConjugacy}, $-x$ is conjugate to $x$, which proves the first part.

	The absence of Gibbs states is an application of Proposition~\ref{propno:NoGenTemp}.
\end{proof}

\subsection{$\symp(2n, \setR)$}

The group $\Symp(2n, \setR)$ is the group of linear automorphisms of $\setR^{2n}$ that preserve a given symplectic product $B$.
It is connected~\cite{Helgason}.

Let $(x,y,h)$ be a standard triple in $\symp(2n,\setR)$ and the associated isotypic decomposition of $\setR^{2n}$:
\[
	(\setR^{2n}, B) 
	\simeq
	\bigoplus_r
		(V_r \otimes W_r, B_r \otimes \phi_r)
\]
with $W_r$ real vector spaces. According to Proposition~\ref{propno:TensorProduct}:
\begin{itemize}
	\item if $r$ is odd, then $\phi_r$ is a non-degenerate symmetric product and the isometry class of $(W_r, \phi_r)$ is determined by the signature $(p_r, q_r)$ of $\phi_r$,
	\item if $r$ is even, then $\phi_r$ is a symplectic product and the isometry class of $(W_r, \phi_r)$ is determined by the dimension of $W_r$.
\end{itemize}

\begin{theorem}\label{thmno:NegationSp2n}
	Let $(x,y,h)$ be a standard triple in $\symp(2n,\setR)$.
	Then the adjoint orbit of $x$ is stable under negation if and only if for every odd $r$, $\phi_r$ has a split signature ($p_r = q_r$).
	
	In this case, the adjoint orbit admits no Gibbs state.
\end{theorem}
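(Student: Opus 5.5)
The plan is to apply Theorem~\ref{thmno:IsometryConjugacy} with $\setK=\setR$ and $B$ the symplectic form on $\setR^{2n}$. Here $G=\Symp(2n,\setR)$, which is connected, so $G$-conjugacy coincides with lying in the same adjoint orbit. Point~4 of that theorem then reduces the question to a statement about the multiplicity spaces: $x$ is conjugate to $-x$ if and only if, for every odd $r$, $(W_r,\phi_r)$ is isometric to $(W_r,-\phi_r)$. The case $x=0$ is trivial, since the orbit is $\{0\}$ and the condition is vacuous because all $W_r$ with $r$ odd vanish. We may therefore assume $x\neq 0$ and fix a standard triple.

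Next we identify the forms $\phi_r$ for odd $r$. By Proposition~\ref{propno:InvariantProductssl2modules}, $B_r$ is symplectic when $r$ is odd. By Proposition~\ref{propno:sl2moduleIsometry}, $B$ restricts on the isotypic component $V_r\otimes W_r$ to $B_r\otimes\phi_r$. This restriction is non-degenerate, because the isotypic components are pairwise $B$-orthogonal and $B$ is non-degenerate. The table of Proposition~\ref{propno:TensorProduct} then says that $\phi_r$ is symmetric, since skew $\otimes$ symmetric is skew and $B$ is skew. Non-degeneracy of $B_r\otimes\phi_r$ together with that of $B_r$ forces $\phi_r$ to be non-degenerate. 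Indeed, a vector $w$ in the kernel of $\phi_r$ would make $v\otimes w$ lie in the kernel of $B_r\otimes\phi_r$ for every $v$.

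By Sylvester's law of inertia, $\phi_r$ of signature $(p_r,q_r)$ is isometric to $-\phi_r$, which has signature $(q_r,p_r)$, if and only if $p_r=q_r$. Combining this with the reduction above gives the claimed equivalence. For even $r$, $\phi_r$ is symplectic and is always isometric to its negative (for instance via a diagonal map $\pm 1$ on a Darboux basis). This is consistent with point~4, which imposes no condition for even $r$.

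Finally, when the orbit is stable under negation and $x\neq0$, Proposition~\ref{propno:NoGenTemp} shows that the partition function is infinite for every $\beta$, so no Gibbs state exists. The only step needing real care is checking that $\phi_r$ is non-degenerate, so that the signature is well defined and Sylvester applies. The rest is a direct assembly of earlier results.
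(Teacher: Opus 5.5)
Your proposal is correct and follows the paper's own argument: apply point~4 of Theorem~\ref{thmno:IsometryConjugacy} to the connected group $\Symp(2n,\setR)$, observe that $\phi_r$ is a non-degenerate symmetric form for odd $r$ so that Sylvester's law gives the split-signature criterion, and use Proposition~\ref{propno:NoGenTemp} for the Gibbs statement. Your explicit check that $\phi_r$ is non-degenerate, and your restriction of the Gibbs claim to $x\neq 0$ (the zero orbit is a point and does carry a trivial Gibbs state), fill in details the paper leaves implicit.
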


\begin{proof}
	Let $(W_r, \phi_r)$ the multiplicity spaces associated with $(x,y,h)$.
	For every odd $r$, $\phi_r$ is non-degenerate and symmetric. According to Theorem~\ref{thmno:IsometryConjugacy}, $-x$ is conjugate to $x$ if and only if $\phi_r$ is isometric to $-\phi_r$, which is equivalent to $\phi_r$ having split signature.

	The absence of Gibbs state is then a consequence of Proposition~\ref{propno:NoGenTemp}.
\end{proof} 

\subsection{$\so^*(2n)$}

The group $\SO^*(2n)$ is the isotropy group of a non-degenerate skew-Hermitian product $B$ on $\setH^n$ (unique up to equivalence, according to Proposition~\ref{propno:QuaternionHermitian}). It is connected~\cite{Helgason}.

Let $(x, y, h)$ be a standard triple in $\so^*(2n)$. $\setH^{n}$ has a corresponding isotypic decomposition
\[
	(\setH^{n}, B) \simeq \bigoplus_r (V_r \otimes_\setR W_r, B_r \otimes \phi_r)
\]
with $W_r$ quaternionic vector spaces. According to Proposition~\ref{propno:TensorProduct}:
\begin{itemize}
	\item if $r$ is odd, then $\phi_r$ is a non-degenerate Hermitian product, and the isometry class of $(W_r, \phi_r)$ is characterized by its signature $(p_r, q_r)$;
	\item if $r$ is even, then $\phi_r$ is a non-degenerate skew-Hermitian product, and the isometry class of $(W_r, \phi_r)$ is characterized by the dimension of $W_r$.
\end{itemize}

\begin{theorem}
	Let $(x, y, h)$ be a standard triple in $\so^*(2n)$.
	
	Then the adjoint orbit of $x$ is stable under negation if and only if for every odd $r$, $(\phi_r)$ has a split signature.
	
	In particular, such nilpotent orbits admit no Gibbs state.
\end{theorem}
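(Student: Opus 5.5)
The plan follows the same pattern as the proofs of Theorems~\ref{thmno:NegationSpqn} and~\ref{thmno:NegationSp2n}. Take a standard triple $(x,y,h)$ in $\so^*(2n)$. By part 4 of Theorem~\ref{thmno:IsometryConjugacy}, applied with $\setK=\setH$ and $B$ the non-degenerate skew-Hermitian form defining $\SO^*(2n)$, the element $x$ is $\SO^*(2n)$-conjugate to $-x$ if and only if, for every odd $r$, $(W_r,\phi_r)$ is $\setH$-linearly isometric to $(W_r,-\phi_r)$. One point needs checking: Theorem~\ref{thmno:IsometryConjugacy} speaks of conjugacy under the full isometry group $G$, whereas adjoint orbits are orbits of the connected group. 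The paper states that $\SO^*(2n)$ is connected~\cite{Helgason}, so $G$-conjugacy is the same as adjoint conjugacy here.

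Next we use the table of Proposition~\ref{propno:TensorProduct} in the sesquilinear case. For odd $r$, the form $B_r$ is skew-symmetric (Proposition~\ref{propno:InvariantProductssl2modules}) and $B_r\otimes\phi_r$ is skew-Hermitian, so $\phi_r$ is Hermitian. It is also non-degenerate: the isotypic components are pairwise $B$-orthogonal, so $B$ restricts non-degenerately to each $V_r\otimes W_r$, and $B_r$ is non-degenerate, so a vector in the radical of $\phi_r$ would give, after tensoring with $V_r$, vectors in the radical of $B$.

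By Proposition~\ref{propno:QuaternionHermitian}, the isometry class of $(W_r,\phi_r)$ is determined by its signature $(p_r,q_r)$. The form $-\phi_r$ has signature $(q_r,p_r)$. Hence $\phi_r$ is isometric to $-\phi_r$ if and only if $p_r=q_r$, which gives the claimed equivalence. For even $r$ there is nothing to check, since part 4 of Theorem~\ref{thmno:IsometryConjugacy} imposes no condition there; these $\phi_r$ are skew-Hermitian, and all such forms are equivalent in any case. The zero orbit is trivially stable under negation, and all its multiplicity spaces for odd $r$ are zero, so it fits the statement.

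The final claim, absence of Gibbs states, follows directly from Proposition~\ref{propno:NoGenTemp} for nonzero $x$. The only step needing care is the identification of $\phi_r$ as a non-degenerate Hermitian form. The sign conventions for quaternionic sesquilinear forms (left-antilinear) should be consistent with Proposition~\ref{propno:QuaternionHermitian}, so that the signature really classifies $\phi_r$ and negation really swaps $p_r$ and $q_r$. This is routine, so I expect no genuine obstacle.
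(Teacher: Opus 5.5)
Your proposal is correct and follows the paper's proof. You apply part 4 of Theorem~\ref{thmno:IsometryConjugacy}, identify $\phi_r$ for odd $r$ as a non-degenerate quaternionic Hermitian form via Proposition~\ref{propno:TensorProduct}, use Proposition~\ref{propno:QuaternionHermitian} to get $\phi_r \cong -\phi_r$ if and only if $p_r = q_r$, and conclude with Proposition~\ref{propno:NoGenTemp}. The extra points you check (non-degeneracy of $\phi_r$, connectedness of $\SO^*(2n)$ so that $G$-conjugacy coincides with adjoint conjugacy, and the zero orbit) are left implicit in the paper and do not change the argument.
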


\begin{proof}
	Let $(W_r, \phi_r)$ the multiplicity spaces associated with $(x,y,h)$.
	For every odd $r$, $\phi_r$ is non-degenerate and Hermitian. 
	According to Theorem~\ref{thmno:IsometryConjugacy}, $-x$ is conjugate to $x$ if and only if $\phi_r$ is isometric to $-\phi_r$, which is equivalent to $\phi_r$ having split signature.

	The absence of Gibbs state is an application of Proposition~\ref{propno:NoGenTemp}.
\end{proof}

\subsection{$\su(p,q)$}

The group $\SU(p,q)$ is the group of determinant $1$, $\setC$-linear isometries of a Hermitian product $B$ of signature $(p,q)$ on $\setC^{p+q}$.
It is connected, and its Lie algebra $\su(p,q)$ is non-compact as soon as both $p$ and $q$ are nonzero~\cite{Helgason}.
Since we are interested in the nilpotent adjoint orbits, we shall assume below that $p\neq 0$ and $q\neq 0$.

Let $(x, y, h)$ be a standard triple in $\su(p,q)$. $\setC^{p+q}$ has a corresponding isotypic decomposition
\[
	(\setC^{p+q}, B) \simeq \bigoplus_r (V_r \otimes_\setR W_r, B_r \otimes \phi_r)
\]
with $W_r$ complex vector spaces. According to Proposition~\ref{propno:TensorProduct}:
\begin{itemize}
	\item if $r$ is odd, then $\phi_r$ is a non-degenerate skew-Hermitian product, and the isometry class of $(W_r, \phi_r)$ is characterized by the signature $(p_r, q_r)$ of the Hermitian form $-i\phi_r$;
	\item if $r$ is even, then $\phi_r$ is a non-degenerate Hermitian product, and the isometry class of $(W_r, \phi_r)$ is characterized by its signature $(p_r, q_r)$.
\end{itemize}

\begin{proposition}\label{propno:ConjugateUpq}
	Let $x$ be a nonzero nilpotent element of $\su(p,q)$. 
	Then $-x$ is $\UU(p,q)$-conjugate to $x$ if and only if for every even $r$, $\phi_r$ has split signature ($p_r=q_r$).
\end{proposition}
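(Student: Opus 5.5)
The plan is to apply part~4 of Theorem~\ref{thmno:IsometryConjugacy} with $\setK = \setC$ and $B$ the Hermitian form of signature $(p,q)$ on $\setC^{p+q}$. The isometry group of $B$ is then exactly $G = \UU(p,q)$. Its Lie algebra is $\mathfrak{u}(p,q)$, which contains $\su(p,q)$ and differs from it only by the centre $i\setR\,\id$. Since $\UU(p,q) = \SU(p,q)\cdot \UU(1)$ with $\UU(1)$ central, and nilpotent elements of $\mathfrak u(p,q)$ lie in $\su(p,q)$, the theorem applies directly to our $x$. Fix a standard triple $(x,y,h)$ in $\su(p,q)$ via Theorem~\ref{thmno:JacobsonMorosov}, and take the decomposition $(\setC^{p+q},B)\simeq\bigoplus_r (V_r\otimes W_r, B_r\otimes\phi_r)$ recalled just above. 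Part~4 then says that $-x$ is $\UU(p,q)$-conjugate to $x$ if and only if $(W_r,\phi_r)\simeq(W_r,(-1)^r\phi_r)$ for every $r$. This comes from the maps $u_r$ of Proposition~\ref{propno:x->-x}, which satisfy $u_r^*B_r=(-1)^rB_r$ and are real, hence compatible with the sesquilinear tensor product.

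The second step is to translate the isometry condition into signatures, separately for each parity of $r$. For the parity where the sign $(-1)^r$ is $+1$ there is nothing to check. For the parity where it is $-1$, we must compare $\phi_r$ with $-\phi_r$.
\begin{itemize}
\item If $\phi_r$ is Hermitian of signature $(p_r,q_r)$, then $-\phi_r$ has signature $(q_r,p_r)$. By Sylvester's law of inertia the two are isometric iff $p_r=q_r$.
\item If $\phi_r$ is skew-Hermitian, the same argument applies to the Hermitian form $-i\phi_r$. The form $-i(-\phi_r)$ has swapped signature, so again the condition is $p_r=q_r$.
\end{itemize}
Either way, the criterion is that $\phi_r$ has split signature for each $r$ in the relevant parity class.

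The main obstacle is bookkeeping the parities consistently. Three ingredients must be tracked together: which $r$ carry Hermitian versus skew-Hermitian $\phi_r$ (Proposition~\ref{propno:TensorProduct}), the sign $(-1)^r$ produced by $u_r$, and the claim that the construction of Theorem~\ref{thmno:IsometryConjugacy} goes through unchanged in the sesquilinear setting. The last point needs $u_r\otimes\id_{W_r}$ to be $\setC$-linear, which holds because $u_r$ is real.

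I would test the result on $\su(1,1)\simeq\sl(2,\setR)$ with $\setC^2=V_1\otimes W_1$. There $x$ and $-x$ are known not to be conjugate, and $\phi_1$ on the line $W_1$ is non-split. This forces the condition to bear on the indices $r$ with $(-1)^r=-1$, i.e.\ on odd $r$. If the parity in the displayed statement is meant literally as ``even'', this example shows it must be checked against that convention. I would therefore carry out the argument with the parity dictated by $u_r^*B_r=(-1)^rB_r$, and reconcile it with the statement's indexing, which may follow another convention for $r$, before finalizing.
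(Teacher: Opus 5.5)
This is correct and is the paper's proof. You apply part~4 of Theorem~\ref{thmno:IsometryConjugacy} with $\setK=\setC$ and $G=\UU(p,q)$; your remark that nilpotents lie in $\su(p,q)$ and that $\UU(p,q)=\SU(p,q)\cdot\UU(1)$, with $\UU(1)$ central, usefully covers the fact that $\UU(p,q)$ is not semisimple. You then apply Sylvester's law to $-i\phi_r$ on the odd-$r$ multiplicity spaces, which are skew-Hermitian by Proposition~\ref{propno:TensorProduct}.

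Your parity worry is well founded, and no alternative convention is needed. With the paper's convention $\dim V_r=r+1$, ``even'' in the statement is a typo for ``odd'': the paper's own proof, Theorem~\ref{thmno:NegationSUpq} and Table~\ref{tableno:Conditions} all place the condition on odd $r$, and your $\su(1,1)$ example correctly shows that the literal ``even'' version fails.
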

\begin{proof}
	We apply Theorem~\ref{thmno:IsometryConjugacy} with $B$ on $\setC^{p,q}$.
	The group $G$ is $\UU(p,q)$.
	For every odd $r$, the skew-Hermitian product $\phi_r$ is isometric to $-\phi_r$ if and only they have the same skew-Hermitian signature, namely if and only if the Hermitian forms $-i\phi_r$ and $i\phi_r$ have the same signature.
\end{proof}

Proposition~\ref{propno:ConjugateUpq} gives a result of conjugacy under $\UU(p,q)$.
In order to obtain conjugacy under the subgroup $\SU(p,q)$, we use the following property:
\com{Pas trouvé de référence...}
$\SU(p,q) \cdot Z(\UU(p,q)) = \UU(p,q)$.
It implies that the conjugacy classes under $\UU(p,q)$ are exactly the conjugacy classes under the subgroup $\SU(p,q)$.
We can now conclude:
\begin{theorem}\label{thmno:NegationSUpq}
	Let $(x,y,h)$ be a standard triple in $\su(p,q)$.
	Then the adjoint orbit of $x$ is stable under negation if and only if for every odd $r$, $-i\phi_r$ has a split signature ($p_r = q_r$).
	
	In this case, the adjoint orbit admits no Gibbs state.
\end{theorem}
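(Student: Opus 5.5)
The plan is to reduce the statement about $\SU(p,q)$-orbits to the $\UU(p,q)$-conjugacy criterion already obtained in Proposition~\ref{propno:ConjugateUpq}, and then to apply Proposition~\ref{propno:NoGenTemp}. Concretely, let $x\in\su(p,q)$ be nonzero nilpotent with standard triple $(x,y,h)$. By Theorem~\ref{thmno:JacobsonMorosov} we may take the triple inside $\su(p,q)$. Since $\su(p,q)$ sits inside $\mathfrak{u}(p,q)$ and $x,y,h$ are traceless, the isotypic decomposition and the multiplicity spaces $(W_r,\phi_r)$ are the same whether we view the triple in $\su(p,q)$ or in $\mathfrak{u}(p,q)$.

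\textbf{Step 1: the $\UU(p,q)$-criterion.} Theorem~\ref{thmno:IsometryConjugacy}(4), applied with $G=\UU(p,q)$, says that $-x$ is $\UU(p,q)$-conjugate to $x$ if and only if, for every odd $r$, the skew-Hermitian form $\phi_r$ is isometric to $-\phi_r$. For odd $r$, the isometry class of $\phi_r$ is determined by the signature $(p_r,q_r)$ of the Hermitian form $-i\phi_r$. The form $-\phi_r$ corresponds to $i\phi_r$, whose signature is $(q_r,p_r)$. So the condition is exactly $p_r=q_r$ for all odd $r$.

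Note that Proposition~\ref{propno:ConjugateUpq} says ``even'' in its statement, but its proof concerns odd $r$. I will use the odd-$r$ version, which is what the proof establishes and what Theorem~\ref{thmno:IsometryConjugacy} gives.

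\textbf{Step 2: passing from $\UU(p,q)$ to $\SU(p,q)$.} This is the main obstacle, since the adjoint orbit is the orbit of the connected group $\SU(p,q)$. I would prove $\UU(p,q)=\SU(p,q)\cdot Z(\UU(p,q))$ directly. Given $g\in\UU(p,q)$, $\det g$ is a unit complex number $e^{i\theta}$, because $g^*Bg=B$ gives $|\det g|^2=1$. Set $\lambda=e^{i\theta/(p+q)}$; then $\lambda\Id$ is central in $\UU(p,q)$ and $\lambda^{-1}g\in\SU(p,q)$. Central elements act trivially by conjugation, so $g\cdot x=(\lambda^{-1}g)\cdot x$, and the $\UU(p,q)$- and $\SU(p,q)$-orbits of $x$ coincide. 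Finally, $\SU(p,q)$ is connected, so its image in $\GL(\su(p,q))$ under the adjoint action is the adjoint group. Hence the adjoint orbit of $x$ is stable under negation exactly when $-x$ is $\UU(p,q)$-conjugate to $x$, which gives the stated criterion.

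\textbf{Step 3: no Gibbs states.} If the orbit is stable under negation, then $x$ and $-x$ lie in the same adjoint orbit, and Proposition~\ref{propno:NoGenTemp} shows that the partition function diverges for every $\beta$. Hence the orbit admits no Gibbs state.
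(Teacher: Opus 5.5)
Your proposal is correct and follows the paper's route. You apply Theorem~\ref{thmno:IsometryConjugacy} with $G=\UU(p,q)$ to get the split-signature condition on $-i\phi_r$ for odd $r$, pass to $\SU(p,q)$ via $\UU(p,q)=\SU(p,q)\cdot Z(\UU(p,q))$, and conclude with Proposition~\ref{propno:NoGenTemp}; your determinant-root argument proves that decomposition, which the paper only asserts, and you are right that ``even'' in Proposition~\ref{propno:ConjugateUpq} should read ``odd''.
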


\begin{proof}
	Let $(W_r, \phi_r)$ the multiplicity spaces associated with $(x,y,h)$.
	For every odd $r$, $\phi_r$ is non-degenerate and skew-Hermitian. 
	According to Theorem~\ref{thmno:IsometryConjugacy}, $-x$ is conjugate to $x$ if and only if $\phi_r$ is isometric to $-\phi_r$, which is equivalent to $-i\phi_r$ having split signature.

	The absence of Gibbs state is then a consequence of Proposition~\ref{propno:NoGenTemp}.
\end{proof}

\subsection{$\sl(n, \setH)$}

First, we recall the definition of $\sl(n, \setH)$: for any algebra embedding $\setC \hookrightarrow \setH$, 
\[
	\SL(n, \setH) = \GL(n,\setH) \cap \SL(2n, \setC) = \GL(n,\setH)\cap \SL(4n, \setR)
\]
and similarly at the Lie algebra level. $\SL(n, \setH)$ is connected~\cite{Helgason}.

In this case, there is no invariant bilinear form. According to Proposition~\ref{propno:sl2modKSpace}, the isotypic decomposition of $\setH^n$ takes the form
\[
	\setH^n \simeq \bigoplus_r V_r \otimes_\setR W_r
\]
with $W_r$ quaternionic vector spaces.

\begin{proposition}\label{propno:ConjugateGLnH}
	Let $x$ be a nonzero nilpotent element of $\sl(n, \setH)$. Then $-x$ is always $\GL(n,\setH)$-conjugate to $x$.
\end{proposition}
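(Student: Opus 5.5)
The plan is to mimic the proof of Theorem~\ref{thmno:IsometryConjugacy}(4), but in the simpler setting where there is no invariant form. Let $x$ be a nonzero nilpotent element of $\sl(n,\setH)$. By Theorem~\ref{thmno:JacobsonMorosov} applied to the semisimple algebra $\sl(n,\setH)$, we complete $x$ to a standard triple $(x,y,h)$. This triple defines a representation $\rho$ of $\sl(2,\setR)$ on $\setH^n$ that commutes with the quaternionic scalar action. Here we regard $\GL(n,\setH)$ as acting $\setH$-linearly on one side, with scalars on the other side, so that $\rho$ is $\setH$-linear.

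By Proposition~\ref{propno:sl2modKSpace}, there is an $\setH$-linear isomorphism of $\sl(2,\setR)$-modules
\[
\Psi:\ (\setH^n,\rho) \xrightarrow{\sim} \bigoplus_r (V_r,\rho_r)\otimes_\setR W_r
\]
with quaternionic multiplicity spaces $W_r$. Next we consider the triple $(-x,-y,h)$, which is again a standard triple, with associated representation $\tilde\rho$. For each $r$, the map $u_r$ of Proposition~\ref{propno:x->-x} intertwines $\rho_r$ with $\tilde\rho_r$, where $\tilde\rho_r(X)=-\rho_r(X)$, $\tilde\rho_r(Y)=-\rho_r(Y)$ and $\tilde\rho_r(H)=\rho_r(H)$. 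Set $U:=\bigoplus_r u_r\otimes \id_{W_r}$ on the model space. It is $\setH$-linear because it acts only on the real factors $V_r$. It also satisfies $U\,\rho(Z)\,U^{-1}=\tilde\rho(Z)$ for $Z\in\{X,Y,H\}$, after transporting through $\Psi$.

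We then put $g:=\Psi^{-1}\circ U\circ\Psi\in\GL(n,\setH)$. By construction $g x g^{-1}=-x$, $g y g^{-1}=-y$ and $g h g^{-1}=h$. Concretely, $g$ acts on the $h$-weight spaces of each isotypic component by the alternating signs $(-1)^i$. No compatibility with a bilinear form needs to be checked, which is why every orbit works here, in contrast to the cases treated above. This gives the $\GL(n,\setH)$-conjugacy claimed in Proposition~\ref{propno:ConjugateGLnH}.

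The only point needing care is the side convention. We must make sure that the $\setH$-structure on $W_r$ given by Schur's lemma is compatible with viewing $u_r\otimes\id$ as $\setH$-linear, i.e.\ that $\setH$ acts only through the $W_r$ factor. This is exactly the content of Proposition~\ref{propno:sl2modKSpace}, so I expect no real obstacle.

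The subsequent step, presumably needed for the $\SL(n,\setH)$ statement that follows, is to pass from $\GL(n,\setH)$ to $\SL(n,\setH)$. For this one would note that $\GL(n,\setH)=\SL(n,\setH)\cdot\setR^*_+\id$, using that the real determinant on $\setH^n\cong\setR^{4n}$ is positive, and that central scalars do not affect conjugation.
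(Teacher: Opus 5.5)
Your proposal is correct and follows the paper's proof. You complete $x$ to a standard triple, take the $\setH$-linear isotypic decomposition from Proposition~\ref{propno:sl2modKSpace}, and conjugate by $\bigoplus_r u_r\otimes\id_{W_r}$, which is $\setH$-linear because it acts only on the real factors $V_r$ and needs no form compatibility. Your closing remark on passing to $\SL(n,\setH)$ via central real scalars likewise matches the paper's use of $\SL(n,\setH)\cdot Z(\GL(n,\setH))=\GL(n,\setH)$.
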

	We need a simpler version of Theorem~\ref{thmno:IsometryConjugacy}, for $G=\GL(n,\setH)$, which we briefly prove here.
\begin{proof}
	As in the proof of Theorem~\ref{thmno:IsometryConjugacy}, we shall use the maps 
	\[
		(V_r, \rho_r) \otimes_\setR W_r 
			\xrightarrow[\sim]{u_r \otimes \id_{W_r}}
		(V_r, \tilde\rho_r) \otimes_\setR W_r
	\]
	that are $\setH$-linear and conjugate the actions of $x$ and $-x$.
	The endomorphism $\bigoplus_r u_r \otimes \id_{W_r}$ is a map in $\GL(\setH^n)$ that conjugates $x$ and $-x$.
\end{proof}

Proposition~\ref{propno:ConjugateGLnH} gives a result of conjugacy under $\GL(n,\setH)$.
In order to obtain conjugacy under the subgroup $\SL(n, \setH)$, we use the following property:
\com{Pas trouvé de référence...}
$\SL(n, \setH) \cdot Z(\GL(n,\setH)) = \GL(n,\setH)$.
It implies that conjugacy classes under $\GL(n,\setH)$ are exactly conjugacy classes under the subgroup $\sl(n, \setH)$.
We can now conclude:
\begin{theorem}\label{thmno:NegationSLnH}
	Any nilpotent orbit of $\sl(n, \setH)$ is stable under negation.
	
	In particular, no nonzero nilpotent orbit admits Gibbs states.
\end{theorem}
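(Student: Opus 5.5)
The plan mirrors the proofs of Theorems~\ref{thmno:NegationSpqn} and \ref{thmno:NegationSUpq}, but with $\GL(n,\setH)$ playing the role of the isometry group. The zero orbit is trivially stable under negation, so we fix a nonzero nilpotent $x\in\sl(n,\setH)$. Proposition~\ref{propno:ConjugateGLnH} then gives some $g\in\GL(n,\setH)$ with $g x g^{-1} = -x$. Explicitly, $g=\bigoplus_r u_r\otimes\id_{W_r}$, written in the isotypic decomposition of $\setH^n$ coming from a standard triple $(x,y,h)$ provided by Theorem~\ref{thmno:JacobsonMorosov}.

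The remaining task is to replace $g$ by an element of $\SL(n,\setH)$. We use the stated decomposition $\GL(n,\setH)=\SL(n,\setH)\cdot Z(\GL(n,\setH))$ and write $g = s z$ with $s\in\SL(n,\setH)$ and $z$ central. Central elements act trivially under conjugation, so $s x s^{-1} = g x g^{-1} = -x$. This shows $-x\in\Orb_x$, that is, $-\Orb_x=\Orb_x$. We also note that $\SL(n,\setH)$ is connected, so it is the connected group whose orbits are the adjoint orbits.

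The main obstacle is this central-factor decomposition, since the paper states it without reference. I would check it directly. The center of $\GL(n,\setH)$ is $\setR^*\Id$. The determinant used for $\SL(n,\setH)$, the complex determinant on $\GL(2n,\setC)$, takes values in $\setR_{>0}$ on $\GL(n,\setH)$. A real scalar $t>0$ has $\det(t\Id)=t^{2n}$, so we may choose $t$ with $t^{2n}=\det g$, and then $s=t^{-1}g$ has determinant $1$. As a safer alternative that avoids the decomposition, we can compute $\det g$ directly. Each $u_r$ is diagonal with entries $\pm1$, so $\det_\setC(u_r\otimes\id_{W_r})=(\pm1)^{2\dim_\setH W_r}=1$. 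Hence $g$ already lies in $\SL(n,\setH)$. The remark's $\tilde u_r$ could be used as well, but it is not needed here.

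Finally, since $x$ is a nonzero nilpotent element whose orbit contains $-x$, Proposition~\ref{propno:NoGenTemp} shows that the partition function diverges for every $\beta$. Hence no nonzero nilpotent orbit of $\sl(n,\setH)$ admits Gibbs states.
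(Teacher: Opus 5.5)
Your proposal is correct and follows the paper's route. Proposition~\ref{propno:ConjugateGLnH} gives a $\GL(n,\setH)$-conjugation of $x$ to $-x$, the decomposition $\GL(n,\setH)=\SL(n,\setH)\cdot Z(\GL(n,\setH))$ moves it into $\SL(n,\setH)$, and Proposition~\ref{propno:NoGenTemp} then rules out Gibbs states. Your extra checks supply the justification the paper leaves unreferenced: you derive that decomposition from the positivity of the complex determinant, and alternatively note directly that $\det_\setC\bigl(\bigoplus_r u_r\otimes\id_{W_r}\bigr)=1$.
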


\begin{proof}
	The absence of Gibbs states is an application of Proposition~\ref{propno:NoGenTemp}.
\end{proof}

\subsection{$\so(p,q, \setR)$}

The group $\O(p,q, \setR)$ is the group of isometries of a symmetric product $B$ of signature $(p,q)$ on $\setR^{p+q}$.
The Lie-algebra is non-compact as soon as both $p$ and $q$ are nonzero, in which case 
$\O(p,q, \setR)$ has four connected components.
Since we are interested in the nilpotent adjoint orbits, we shall assume below that $p\neq 0$ and $q\neq 0$,
and call the identity component $\SOp(p,q, \setR)$
and $\so(p,q, \setR)$ its Lie algebra.

Let $(x, y, h)$ be a standard triple in $\so(p,q, \setR)$. $\setR^{p+q}$ has a corresponding isotypic decomposition
\[
	(\setR^{p+q}, B) \simeq \bigoplus_r (V_r \otimes_\setR W_r, B_r \otimes \phi_r)
\]
with $W_r$ real vector spaces. According to Proposition~\ref{propno:TensorProduct}:
\begin{itemize}
	\item if $r$ is odd, then $\phi_r$ is a symplectic product, and the isometry class of $(W_r, \phi_r)$ is characterized by the dimension $n_r$ of $W_r$;
	\item if $r$ is even, then $\phi_r$ is a non-degenerate symmetric product, and the isometry class of $(W_r, \phi_r)$ is characterized by its signature $(p_r, q_r)$.
\end{itemize}

\begin{proposition}\label{propno:ConjugateOpq}
	Let $x$ be a nonzero nilpotent element of $\so(p,q, \setR)$. 
	Then $-x$ is always $\O(p,q,\setR)$-conjugate to $x$.
\end{proposition}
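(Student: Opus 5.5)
We apply Theorem~\ref{thmno:IsometryConjugacy} with $\setK = \setR$, $V = \setR^{p+q}$ and $B$ the symmetric form of signature $(p,q)$. The group $G$ of that theorem is exactly $\O(p,q,\setR)$, and its Lie algebra is $\so(p,q,\setR)$, since the identity component has the same Lie algebra. Fix a standard triple $(x,y,h)$, which exists by the first point of that theorem. Let $(W_r,\phi_r)$ be the associated multiplicity spaces. By the fourth point, $x$ is $\O(p,q,\setR)$-conjugate to $-x$ if and only if, for every odd $r$, $(W_r,\phi_r)$ is isometric to $(W_r,-\phi_r)$.

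The key step is therefore to check this condition for odd $r$. By the discussion preceding the statement, for odd $r$ the invariant form $B_r$ is symplectic (Proposition~\ref{propno:InvariantProductssl2modules}) and $B$ is symmetric. The table of Proposition~\ref{propno:TensorProduct} then shows that $\phi_r$ is skew-symmetric. It is also non-degenerate, because $B$ is non-degenerate and the isotypic components are pairwise orthogonal. Hence $\phi_r$ is a real symplectic form, and $n_r = \dim W_r$ is even.

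Now $-\phi_r$ is again a non-degenerate symplectic form on the same space. Since symplectic forms are classified by dimension alone, $-\phi_r$ is isometric to $\phi_r$. One can also write the isometry explicitly: in a Darboux basis $(e_i,f_i)$, the map $e_i\mapsto e_i$, $f_i\mapsto -f_i$ sends $\phi_r$ to $-\phi_r$. So the condition holds for every odd $r$, and Theorem~\ref{thmno:IsometryConjugacy} gives that $-x$ is $\O(p,q,\setR)$-conjugate to $x$.

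There is no real obstacle here, since the statement only asks for conjugacy under the full group $\O(p,q,\setR)$. One point to check is that Theorem~\ref{thmno:IsometryConjugacy} applies to the full isometry group even though it is disconnected. This is fine: its proof only uses Theorem~\ref{thmno:KostantConjugacy} together with Schur-type arguments, and these work for the whole isometry group. The genuinely delicate question, namely conjugacy under the identity component $\SOp(p,q,\setR)$, is not part of this statement and would be treated afterwards. For that, one would track the determinant and spinor norm of the conjugating element $\bigoplus_r u_r\otimes f_r$ and correct it using the maps $\tilde u_r$ and isometries of the multiplicity spaces $W_r$ for even $r$.
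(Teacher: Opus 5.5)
Your proposal is correct and is the same argument as the paper's: apply Theorem~\ref{thmno:IsometryConjugacy} with $G=\O(p,q,\setR)$, and note that for odd $r$ the multiplicity form $\phi_r$ is a non-degenerate symplectic form, hence isometric to $-\phi_r$. Your additional details make explicit what the paper leaves implicit: the non-degeneracy of $\phi_r$, the explicit Darboux isometry, and the observation that the conjugating element $\bigoplus_r u_r\otimes f_r$ lies in the full, disconnected isometry group.
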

\begin{proof}
	We apply Theorem~\ref{thmno:IsometryConjugacy} with $B$ on $\setR^{p,q}$.
	The group $G$ is $\O(p,q,\setR)$.
	For every odd $r$, the symplectic product $\phi_r$ is always isometric to $-\phi_r$.
\end{proof}

Proposition~\ref{propno:ConjugateOpq} gives a result of conjugacy under $\O(p,q,\setR)$.
The subtlety of the case $\so(p,q, \setR)$ is that the isometry group $\O(p,q,\setR)$ is not connected: it has four connected components, thus $\O(p,q,\setR)$-orbits are potentially larger that the $\SOp(p,q, \setR)$-orbits.
More work is needed to characterize the $\SOp(p,q, \setR)$-conjugacy orbits.

Our analysis will rely on the following standard Lemma:
\begin{lemma}\label{lmno:SOpqConnectedComponents}
	Let $x\in \so(p,q, \setR)$ and $g\in \O(p,q,\setR)$.
	Then $x$ and $\Ad_g x$ are in the same $\SOp(p,q,\setR)$-conjugacy orbit if and only if $g\in \SOp(p,q,\setR)\cdot Z_{\O(p,q,\setR)}(x)$, namely, if there exists an element of $Z_{\O(p,q,\setR)}(x)$ in the same connected component as $g$.
\end{lemma}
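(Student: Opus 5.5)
We argue purely group-theoretically. Write $G=\O(p,q,\setR)$, $G^0=\SOp(p,q,\setR)$ and $Z=Z_{G}(x)$. The key facts are that $G^0$ is normal in $G$ (it is the identity component) and that $G^0\cdot Z$ is a subgroup of $G$, since $G^0$ is normal. The statement then amounts to identifying $G^0\cdot Z$ with the set of $g$ for which $\Ad_g x\in \Ad_{G^0}x$.

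For the direction ($\Leftarrow$), assume $g=g_0z$ with $g_0\in G^0$ and $z\in Z$. Then $\Ad_g x=\Ad_{g_0}\Ad_z x=\Ad_{g_0}x$, which lies in the $G^0$-orbit of $x$.

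For the direction ($\Rightarrow$), assume $\Ad_g x=\Ad_{g_0}x$ for some $g_0\in G^0$. Then $g_0^{-1}g$ centralizes $x$, so $z:=g_0^{-1}g\in Z$ and $g=g_0z\in G^0\cdot Z$.

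It remains to justify the reformulation \enquote{there exists an element of $Z$ in the same connected component as $g$}.
\begin{itemize}
	\item The connected components of $G$ are exactly the cosets $gG^0=G^0g$; this uses that $G^0$ is open, being the identity component of a Lie group, and normal.
	\item If $g\in G^0z$ for some $z\in Z$, then $z$ lies in the component $G^0g$ of $g$.
	\item Conversely, if $z\in Z$ lies in the component of $g$, then $g\in G^0z\subseteq G^0\cdot Z$.
\end{itemize}
No real obstacle arises. The only point requiring care is the use of normality of $G^0$, so that left and right cosets coincide and $G^0\cdot Z=Z\cdot G^0$. Note also that the argument never uses the particular number of components (four); it holds for any Lie group acting linearly.
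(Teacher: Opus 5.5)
Your proof is correct and follows the paper's route: rewrite $\Ad_g x=\Ad_{g_0}x$ as $g_0^{-1}g\in Z_{\O(p,q,\setR)}(x)$, then identify the connected components of $\O(p,q,\setR)$ with the cosets of its identity component $\SOp(p,q,\setR)$. The only difference is that you spell out the coset bookkeeping (openness and normality of the identity component, left versus right cosets), which the paper leaves implicit.
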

\begin{proof}
	The first part is straightforward:
	\begin{equation*}
	\begin{aligned}
		\forall h \in \SOp(p,q,\setR), 
		\Ad_g x = \Ad_h x
		\Leftrightarrow
		\Ad_{h^{-1}g} x = x
		\Leftrightarrow
		h^{-1}g \in Z_{\O(p,q,\setR)}(x)
	\end{aligned}
	\end{equation*}
	so the existence of such an $h$ in $\SOp(p,q, \setR)$ is equivalent to $g\in \SOp(p,q, \setR) \cdot Z_{\O(p,q,\setR)}(x)$. 	
    Since $\SOp(p,q,\setR)$ is the identity component of $\O(p,q, \setR)$, the $\SOp(p,q,\setR)$-cosets are the connected components of $\O(p,q,\setR)$, which proves the second part.
    
\end{proof}

Let us first justify that the question of conjugacy can be treated at the level of the standard triples.
\begin{lemma}\label{lmno:CentralizerTriple}
    Let $(x,y,h)$ be a standard triple in $\so(p,q, \setR)$.
    Let $g\in Z_{O(p,q)}(x)$. Then there exists $g'\in \O(p,q,\setR)$ in the same connected component as $g$ that centralizes the standard triple $(x,y,h)$.
\end{lemma}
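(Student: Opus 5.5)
We use the classical structure of the centralizer of a nilpotent element: $Z_{\O(p,q,\setR)}(x) = L \ltimes U$, where $L = Z_{\O(p,q,\setR)}(x,y,h)$ is the centralizer of the whole standard triple and $U$ is a connected unipotent group. Once this is in place, any $g \in Z_{\O(p,q,\setR)}(x)$ factors as $g = g' u$ with $g'\in L$ and $u \in U$. Since $U$ is connected and contains the identity, $g'$ lies in the same connected component as $g$, which is exactly the statement.

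\textbf{Step 1: unipotent part.} Let $\mathfrak z = Z_{\so(p,q,\setR)}(x)$. Decompose $\mathfrak z$ under $\ad_h$. Since $\mathfrak z$ consists of highest weight vectors for the adjoint $\sl(2,\setR)$-action on $\so(p,q,\setR)$, its $\ad_h$-eigenvalues are nonnegative integers. Hence
\[
\mathfrak z = \mathfrak z_0 \oplus \mathfrak u, \qquad \mathfrak u := \bigoplus_{k>0} \mathfrak z_k,
\]
where $\mathfrak z_0$ is the centralizer of the triple and $\mathfrak u$ is a nilpotent ideal of $\mathfrak z$. Put $U = \exp(\mathfrak u)$. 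It is a connected unipotent subgroup of $Z_{\O(p,q,\setR)}(x)$, and it is normal there because $\Ad_g$ preserves $\mathfrak z$ and its positive-weight part. That last point is where Step 2 enters.

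\textbf{Step 2: the main obstacle.} We must show $Z_{\O(p,q,\setR)}(x) = L\cdot U$. Here we rely on Theorem~\ref{thmno:KostantConjugacy} (Kostant), in the refined form that the conjugating element can be taken in the unipotent group $U$. Let $g \in Z_{\O(p,q,\setR)}(x)$. Then $(x, \Ad_g y, \Ad_g h)$ is another standard triple containing $x$. I would argue directly that $\Ad_g h - h \in \mathfrak u$: write $z = \Ad_g h - h$. Then $[z,x] = 0$, so $z \in \mathfrak z$. Moreover $z$ lies in the image of $\ad_x$, because both $h$ and $\Ad_g h = [x, \Ad_g y]$ lie in $[x,\so(p,q,\setR)]$. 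Elements of $\mathfrak z \cap [x, \so(p,q,\setR)]$ have no weight-zero component, since weight-zero highest weight vectors are trivial modules and so are not in the image of $\ad_x$. Hence $z \in \mathfrak u$.

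By the standard Kostant argument, namely that $\exp(\ad \mathfrak u)$ acts simply transitively on $\{h' : h'-h\in\mathfrak u,\ [h',x]=2x\}$ via the affine action $h \mapsto h + [\cdot]$ and induction on weight, there is $u\in U$ with $\Ad_u h = \Ad_g h$. Then $\Ad_u y = \Ad_g y$, because $y$ is uniquely determined by $x$ and $h$ (a standard fact, from injectivity of $\ad_x$ on negative weights). Therefore $g' := u^{-1} g$ centralizes $(x,y,h)$.

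\textbf{Step 3: conclusion.} Since $U$ is connected, $u$ can be joined to $\id$ by a path in $U$. Hence $g' = u^{-1}g$ lies in the connected component of $g$, and it centralizes the triple, as required.
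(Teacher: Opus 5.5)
Your proof is correct and follows the same idea as the paper's: view $(x,\Ad_g y,\Ad_g h)$ as a second standard triple through $x$, then multiply $g$ by an element that centralizes $x$, lies in the identity component, and conjugates this triple back to $(x,y,h)$. The paper takes that element directly from Theorem~\ref{thmno:KostantConjugacy} (it lies in $\SOp(p,q,\setR)$ because the adjoint group is connected), whereas you reprove the refined Kostant statement with the conjugator in the connected group $U=\exp(\mathfrak u)$; the semidirect-product structure and the normality of $U$ are not needed for the lemma.
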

\begin{proof}
    Since $g$ acts on $\so(p,q, \setR)$ by an automorphism of Lie algebra, $(\Ad_g x = x, \Ad_g y, \Ad_g h)$ is a standard triple in $\so(p,q, \setR)$.
    According to Theorem~\ref{thmno:KostantConjugacy}, there exists $h\in Z_{\SO^+(p,q, \setR)}(x)$ such that $hg$ centralizes the standard triple $(x,y,h)$. Since $h$ is in the identity component of $\O(p,q, \setR)$, $hg$ belongs to the same connected component as $g$.
\end{proof}

Tracking the connected components of $\O(p,q,\setR)$ will require some setup.
\begin{definition}
    The group of connected components $\pi_0(\O(p,q,\setR))$ is isomorphic to $\setZ_2\times\setZ_2$ (we assume $p, q \geqslant 1$).
    We define the group homomorphisms $\sigma, \tau : \O(p,q,\setR)\to \{\pm 1\}$ so that:
    \begin{enumerate}
        \item $\sigma(g)=-1$ when $g$ reverses the orientation of a maximal positive subspace of $\setR^{p+q}$,
        \item $\tau(g)=-1$ when $g$ reverses the orientation of a maximal negative subspace of $\setR^{p+q}$.
    \end{enumerate}

    We extend this definition in the case $p=0$ (resp. $q=0$), where $\sigma=1$ (resp. $\tau=1$) and $\tau=\det$ (resp. $\sigma = \det$).
\end{definition}
\begin{remark}
    We use the notation $\sigma, \tau$ indifferently of the orthogonal group on which we use them. When needed, we will explicitly mention the group.
\end{remark}

\begin{proposition}\label{propno:tausigma}
    Let $g_1\in \O(p_1,q_1,\setR)$ and $g_2\in \O(p_2, q_2, \setR)$.
    \begin{enumerate}
        \item Consider $g_1 \oplus g_2\in \O \left( \setR^{p_1+q_1} \oplus \setR^{p_2+q_2} \right) \simeq \O(p_1+p_2, q_1 + q_2, \setR)$. Then
        \begin{align*}
            \tau(g_1 \oplus g_2) &= \tau(g_1)\tau(g_2)\\
            \sigma(g_1 \oplus g_2) &= \sigma(g_1)\sigma(g_2)
        .
        \end{align*}
        \item Consider $\id \otimes g\in \O \left( \setR^{p_1+q_1} \otimes \setR^{p_2+q_2} \right) \simeq \O(p_1p_2 + q_1q_2, p_1q_2 + q_1p_2, \setR)$. Then
        \begin{align*}
            \tau(\id \otimes g) &= \tau(g)^{p_1} \sigma(g)^{q_1}\\
            \sigma(\id \otimes g) &= \sigma(g)^{p_1} \tau(g)^{q_1}
        .
        \end{align*}
    \end{enumerate}
\end{proposition}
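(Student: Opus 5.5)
The plan is to reduce everything to block-diagonal elements that preserve a fixed splitting into a maximal positive and a maximal negative subspace. On such elements $\sigma$ and $\tau$ are just determinants of the diagonal blocks.

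First I will record the characterization I intend to use. Fix an orthogonal splitting $\setR^{p+q} = P\oplus_\perp N$ with $P$ maximal positive and $N$ maximal negative. The maximal compact subgroup $K = \O(P)\times\O(N)$ of $\O(p,q,\setR)$ meets every connected component. This follows from the polar (Cartan) decomposition $g = k\exp(s)$, where $s$ is a $B$-symmetric element of $\so(p,q,\setR)$ and the path $t\mapsto k\exp(ts)$ joins $g$ to $k$. For $k=(a,b)\in K$ we have $\sigma(k)=\det a$ and $\tau(k)=\det b$, directly from the definition. Since $\sigma$ and $\tau$ are locally constant, this gives $\sigma(g)=\det a$ and $\tau(g)=\det b$ for any $g$ in the component of $(a,b)$. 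The same holds in the degenerate cases $p=0$ or $q=0$ with the stated conventions.

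\emph{Direct sums.} Take $g_i$ in the component of $k_i=(a_i,b_i)\in\O(P_i)\times\O(N_i)$, joined by a path $\gamma_i$. Then $\gamma_1\oplus\gamma_2$ is a path in $\O(p_1+p_2,q_1+q_2,\setR)$ from $g_1\oplus g_2$ to $k_1\oplus k_2$. The element $k_1\oplus k_2$ preserves the maximal positive subspace $P_1\oplus P_2$ and the maximal negative subspace $N_1\oplus N_2$. Hence
\[
\sigma(g_1\oplus g_2)=\det(a_1\oplus a_2)=\det a_1\det a_2=\sigma(g_1)\sigma(g_2),
\]
and the same argument with the $b_i$ gives the formula for $\tau$.

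\emph{Tensor products.} Write $\setR^{p_1+q_1}=P_1\oplus N_1$ and deform $g$ along a path $\gamma$ to $k=(a,b)\in\O(P_2)\times\O(N_2)$. Then $\id\otimes\gamma$ is a path in the orthogonal group of $B_1\otimes B_2$ joining $\id\otimes g$ to $\id\otimes k$. For the tensor product form, the subspace $(P_1\otimes P_2)\oplus(N_1\otimes N_2)$ is maximal positive, of dimension $p_1p_2+q_1q_2$. The subspace $(P_1\otimes N_2)\oplus(N_1\otimes P_2)$ is maximal negative, and the two are orthogonal. The element $\id\otimes k$ preserves each of the four summands. Its restriction to $P_1\otimes P_2$ is $\id_{P_1}\otimes a$, with determinant $(\det a)^{p_1}$. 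Its restriction to $N_1\otimes N_2$ is $\id_{N_1}\otimes b$, with determinant $(\det b)^{q_1}$. Therefore
\[
\sigma(\id\otimes g)=(\det a)^{p_1}(\det b)^{q_1}=\sigma(g)^{p_1}\tau(g)^{q_1}.
\]
The same computation on the negative summands gives $\tau(\id\otimes g)=(\det b)^{p_1}(\det a)^{q_1}=\tau(g)^{p_1}\sigma(g)^{q_1}$.

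The main obstacle is justifying that $\sigma$ and $\tau$ are well-defined homomorphisms that are constant on components and equal $\det a$, $\det b$ on $K$. This is exactly what lets us replace $g$ by a block-diagonal representative. I would handle it through the polar decomposition and the identification $\pi_0(\O(p,q,\setR))\simeq\pi_0(K)$, taking this as the content of the definition. Everything after that is determinant bookkeeping.
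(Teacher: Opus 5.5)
Your proof is correct and follows essentially the paper's route: reduce to a representative that preserves a splitting into maximal positive and negative subspaces (you justify this through the Cartan decomposition, where the paper only asserts it), then count determinants. In part 2, your direct computation on $(P_1\otimes P_2)\oplus(N_1\otimes N_2)$ is the same bookkeeping as the paper's splitting into $p_1$ copies of $(\setR^{p_2+q_2},\eta)$ and $q_1$ copies of $(\setR^{p_2+q_2},-\eta)$ followed by part 1. One small slip: in $g=k\exp(s)$, $s$ should be symmetric for the Euclidean form $B|_P\oplus(-B|_N)$, not $B$-symmetric, since every element of $\so(p,q,\setR)$ is $B$-skew.
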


\begin{proof}
    ~\begin{enumerate}
        \item Consider the decomposition of $\setR^{p_1+q_1} \oplus \setR^{p_2+q_2}$ into maximal positive/negative subspaces 
        \[
            \setR^{p_1+q_1}\oplus \setR^{p_2+q_2}
            = \left( \setR^{p_1} \oplus \setR^{p_2} \right)
            \oplus 
            \left( \setR^{q_1} \oplus \setR^{q_2} \right)
        .\]
        Assume that for $i=1,2$ $g_i\in \O(p_i, q_i, \mathbb R)$ preserves the decomposition $\setR^{p_i} \oplus \setR^{q_i}$. This can be done without loss of generality since such an element exists in every connected components and the values of $\tau, \sigma$ are left unchanged inside every such component.
        
        Recalling that $\Lambda^n(\mathbb{R}^n)$ has dimension $1$ for all $n\in \setN$ along with the isomorphism
        \[
            \Lambda^{p_1+p_2} \left( \setR^{p_1}\oplus \setR^{p_2} \right)
                \simeq
            \Lambda^{p_1} \setR^{p_1} \otimes \Lambda^{p_2} \setR^{p_2}
        ,\]
        one sees that $g_1\oplus g_2$ preserves the subspace $\setR^{p_1} \oplus \setR^{p_2}$ and multiply its orientation by a factor $\sigma(g)=\sigma(g_1)\sigma(g_2)$. The same observation for the negative spaces gives the result.
        
        \item 
        The tensor product $\setR^{p_1+q_1} \otimes \setR^{p_2+q_2}$ decomposes as an orthogonal direct sum 
        \[
            \left( \setR^{p_1+q_1}, \eta_{p_1, q_1} \right)^{\oplus p_2} 
            \oplus_\perp
            \left( \setR^{p_1+q_1}, -\eta_{p_1, q_1} \right)^{\oplus q_2} 
        \]
        with $\eta_{p_1, q_1}$ the inner product of signature $(p_1, q_1)$ on $\setR^{p_1 + q_1}$.
        In the space $\left( \setR^{p_1+q_1}, -\eta_{p_1, q_1} = \eta_{q_1, p_1} \right)$, the notions of positive and negative subspaces are inverted, so that $\tau_{q_1,p_1} = \sigma_{p_1, q_1}$ and $\sigma_{q_1,p_1} = \tau_{p_1, q_1}$. We can now compute, according to 1.,
        \[
            \tau(\id \otimes g)
            = \tau_{p_2, q_2}(g)^{p_1} \tau_{q_2, p_2}(g)^{q_1}
            = \tau(g)^{p_1} \sigma(g)^{q_1}
        \]
        and
        \[
            \sigma(\id \otimes g) 
            = \sigma_{p_2, q_2}(g)^{p_1} \sigma_{q_2, p_2}(g)^{q_1}
            = \sigma(g)^{p_1} \tau(g)^{q_1}
        .\]
    \end{enumerate}
\end{proof}

We need to understand how many connected components of $\O(p,q,\setR)$ intersect $Z_{\O(p,q,\setR)}(x)$.

\begin{lemma}\label{lmno:SigmaTauStabx}
    Let $x$ be a nonzero nilpotent element in $\so(p,q, \setR)$ and $(n_r, p_r, q_r)$ the respective dimensions and signatures of $W_r$ and $\phi_r$.
    \begin{enumerate}
        \item If $\forall m\in \setN$, $p_{2m}=q_{2m}=0$ then $Z_{\O(p,q,\setR)}(x) \subset \SOp(p,q, \setR)$.
        
        Assume in the remaining cases that there exists $m\in \setN$ such that $(p_{2m}, q_{2m}) \neq (0,0)$.
        \item If $\forall k \in \setN, q_{4k} = p_{4k+2} = 0$
        then $Z_{\O(p,q,\setR)}(x)$ meets exactly two connected components of $Z_{\O(p,q,\setR)}(x)$ and $(\sigma, \tau) \big( Z_{\O(p,q,\setR)}(x) \big) = \{\pm1\}\times \{1\}$.
        \item If $\forall k \in \setN, p_{4k} = q_{4k+2} = 0$
        then $Z_{\O(p,q,\setR)}(x)$ meets exactly two connected components of $Z_{\O(p,q,\setR)}(x)$ and $(\sigma, \tau) \big( Z_{\O(p,q,\setR)}(x) \big) = \{1\}\times \{\pm1\} $.
        \item Otherwise, $Z_{\O(p,q,\setR)}(x)$ meets all four connected components of $\O(p,q,\setR)$, namely, $(\sigma, \tau) \big( Z_{\O(p,q,\setR)}(x) \big) = \{\pm1\} \times \{\pm1\}$.  
    \end{enumerate}
\end{lemma}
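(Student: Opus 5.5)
By Lemma~\ref{lmno:CentralizerTriple}, it suffices to compute $(\sigma,\tau)$ on the centralizer of the whole standard triple $(x,y,h)$. By Proposition~\ref{propno:sl2moduleIsometry}, that centralizer is exactly the group of maps $\bigoplus_r \id_{V_r}\otimes f_r$ with $f_r\in \O(W_r,\phi_r)$. Concretely:
\[
  f_r\in \Symp(n_r,\setR) \text{ for odd } r, \qquad f_r\in \O(p_r,q_r,\setR) \text{ for even } r.
\]
So the image of $Z_{\O(p,q,\setR)}(x)$ under $(\sigma,\tau)$ is the subgroup of $\{\pm1\}^2$ generated by the values of $(\sigma,\tau)$ on the individual blocks $\id_{V_r}\otimes f_r$, with the other factors equal to the identity. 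The block values multiply by Proposition~\ref{propno:tausigma}(1).

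\textbf{Odd $r$.} Here $\Symp(n_r,\setR)$ is connected, so the block lies in the identity component and contributes $(1,1)$.

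\textbf{Even $r=2m$.} By Proposition~\ref{propno:InvariantProductssl2modules} and our normalization, $B_{2m}$ has signature $(m+1,m)$. Proposition~\ref{propno:tausigma}(2) with $(p_1,q_1)=(m+1,m)$ gives
\[
  \sigma(\id\otimes f)=\sigma(f)^{m+1}\tau(f)^m,\qquad \tau(\id\otimes f)=\tau(f)^{m+1}\sigma(f)^m.
\]
As $f$ ranges over $\O(p_{2m},q_{2m},\setR)$, we have:
\begin{itemize}
\item $\sigma(f)$ can be $-1$ exactly when $p_{2m}\ge1$, and $\tau(f)$ can be $-1$ exactly when $q_{2m}\ge1$; this uses the extended definition for degenerate signatures.
\item Taking $f$ a reflection in a positive vector, so that $\sigma(f)=-1$ and $\tau(f)=1$, gives $(\sigma,\tau)=((-1)^{m+1},(-1)^m)$.
\item Taking $f$ a reflection in a negative vector gives $((-1)^m,(-1)^{m+1})$.
\end{itemize}
Reading this by parity of $m$:
\begin{itemize}
\item For $r=4k$ ($m$ even), a positive vector of $W_r$ yields $(-1,1)$ and a negative vector yields $(1,-1)$.
\item For $r=4k+2$ ($m$ odd), the roles swap: a positive vector yields $(1,-1)$ and a negative one yields $(-1,1)$.
\end{itemize}

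\textbf{Assembling the cases.} Let $S$ be the set of generators $(-1,1)$ and $(1,-1)$ actually produced by the blocks.
\begin{itemize}
\item If all even multiplicity spaces vanish, every generator is trivial. The centralizer then lies in the identity component, which is case~1.
\item $(-1,1)$ arises exactly when some $p_{4k}$ or some $q_{4k+2}$ is nonzero; $(1,-1)$ arises exactly when some $q_{4k}$ or some $p_{4k+2}$ is nonzero.
\item Under the hypothesis of case~2, only $(-1,1)$ can occur, and it does occur because some even multiplicity space is nonzero. So the image is $\{\pm1\}\times\{1\}$, which meets two components.
\item Case~3 is symmetric.
\item Otherwise both generators occur and the image is all of $\{\pm1\}^2$.
\end{itemize}

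The main obstacle is bookkeeping rather than ideas. The delicate points are the sign conventions in Proposition~\ref{propno:tausigma}(2), which order of tensor factors it applies to and with which signature, and the normalization that $V(0)$ is positive in $B_{2m}$. A second subtlety is justifying that the centralizer of the triple is exactly $\prod_r \O(W_r,\phi_r)$ acting diagonally. This should follow from Proposition~\ref{propno:sl2moduleIsometry}(2) applied with $V'=V$.
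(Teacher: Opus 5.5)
Your proposal is correct and follows essentially the same route as the paper: reduce via Lemma~\ref{lmno:CentralizerTriple} to the triple centralizer $\prod_r \O(W_r,\phi_r)$, discard the connected symplectic factors for odd $r$, and apply Proposition~\ref{propno:tausigma} with $B_{2m}$ of signature $(m+1,m)$, splitting by the parity of $m$. Your use of reflections in positive and negative vectors as explicit generators is a slightly more explicit justification of the paper's claim that each even block's image under $(\sigma,\tau)$ is the product of its $\sigma$-image and its $\tau$-image.
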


\begin{proof}
    Let $(x,y,h)$ be a standard triple in $\so(p,q, \setR)$. According to Lemma~\ref{lmno:CentralizerTriple}, it is equivalent to study the values of $\tau$ and $\sigma$ on the centralizer $Z_{\O(p,q,\setR)}(<x,y,h>)$ or on $Z_{\O(p,q,\setR)}(x)$.
    According to Proposition~\ref{propno:sl2moduleIsometry}, there is an isomorphism
    \[
    	Z_{\O(p,q,\setR)}(<x,y,h>)
    	\simeq 
    	\prod_r 
    		\O(W_r, \phi_r)	
    \]
    where $\O(W_r, \phi_r)$ denotes the automorphisms of $W_r$ that preserve the bilinear form $\phi_r$. Note that for odd $r$, $\phi_r$ is not symmetric, but is a symplectic form. Hence $\O(W_r, \phi_r)$ is a symplectic group and is connected.
    
    First, if $W_r = 0$ for all even $r$, then $Z_{\O(p,q,\setR)}(<x,y,h>)$ is a product of symplectic groups. It is therefore connected, thus contained in $\SOp(p,q, \setR)$.
    This proves moreover that in the general case, only the groups $\O(W_r, \phi_r)$ for even $r$ matter.
    
    We now look at the general case.
    Let $r=2m$ be an even integer with $(p_r, q_r)\neq (0,0)$. Then $B_r$ has signature $(m+1, m)$ and $\phi_r$ has signature $(p_r, q_r)$. For any $g\in \O(W_r, \phi_r)$, we compute according to Proposition~\ref{propno:tausigma}:
    \[\begin{aligned}
    	\sigma(\id_{V_{2m}} \otimes g) &= \sigma(g)^{m+1} \tau(g)^{m}\\
    	\tau(\id_{V_{2m}} \otimes g) &= \tau(g)^{m+1} \sigma(g)^{m}
    \end{aligned}
    \quad.\]
    In consequence, 
    \[
    (\sigma, \tau) \big( \id_{V_{2m}} \otimes \O( W_{2m}, \phi_{2m}) \big)
    =
    \sigma \big( \id_{V_{2m}} \otimes \O( W_{2m}, \phi_{2m}) \big)
    \times
    \tau \big( \id_{V_{2m}} \otimes \O( W_{2m}, \phi_{2m}) \big)
    \]
    and the images of $\sigma$ and $\tau$ are
    \[
    	\sigma \left( \id_{V_{2m}} \otimes \O( W_{2m}, \phi_{2m}) \right)
    		= \begin{cases}
    			\{\pm 1 \}
    			\text{ if $m$ is even and $p_{2m} \geqslant 1$ or $m$ is odd and $q_{2m} \geqslant 1$,}\\
    			\{ 1 \}
    			\text{ otherwise}    			
    		\end{cases}
    \]
    and
    \[
    	\tau \left( \id_{V_{2m}} \otimes \O( W_{2m}, \phi_{2m}) \right)
    		= \begin{cases}
    			\{\pm 1 \}
    			\text{ if $m$ is even and $q_{2m} \geqslant 1$ or $m$ is odd and $p_{2m} \geqslant 1$,}\\
    			\{ 1 \}
    			\text{ otherwise.}    			
    		\end{cases}
    \]
    
    Finally, Proposition~\ref{propno:tausigma} implies that
    \begin{align*}
    	\tau\left( Z_{\O(p,q,\setR)}(<x,y,h>) \right)
    	&= \prod_r \tau \left( \id_{V_{r}} \otimes \O( W_r, \phi_r) \right)\\
    	\sigma\left( Z_{\O(p,q,\setR)}(<x,y,h>) \right)
    	&= \prod_r \sigma \left( \id_{V_{r}} \otimes \O( W_r, \phi_r) \right)
    \end{align*}
    where $\prod_r$ is not a direct product of set but denotes the element-wise product of subsets of $\{\pm 1\}$. We conclude that
    \begin{itemize}
	    \item 
	    $\sigma \left( Z_{\O(p,q,\setR)}(<x,y,h>) \right) = \{\pm1\}$ if and only there exists an even $m$ with $p_{2m} \geqslant 1$ or an odd $m$ with $q_{2m} \geqslant 1$,
	    \item 
	    $\tau \left( Z_{\O(p,q,\setR)}(<x,y,h>) \right) = \{\pm1\}$ if and only there exists an even $m$ with $q_{2m} \geqslant 1$ or an odd $m$ with $p_{2m} \geqslant 1$.
	\end{itemize}   
\end{proof}

\begin{lemma}\label{lmno:SigmaTauMinusx}
~\begin{enumerate}
	\item 
	Let $r=2m$ and write
	\[
		V_r = \bigoplus_{i=0}^{r} V_{r}(r-2i)
	.\]
    the decomposition of $V_r$ in weight spaces.
	Define
	\[
		T := \tilde u_r= \bigoplus_i (-1)^{m-i} \id_{V_{r}(r-2i)}
	\quad .\]
	Then in $\O(B_r)\simeq \O(m+1, m, \setR)$ it holds  $\sigma(T)=\tau(T) = (-1)^{m/2}$ when $m$ is even and  $\sigma(T) =\tau(T) = (-1)^{(m+1)/2}$ when $m$ is odd.
%
	\item Let $r,r'$ be odd integers and consider $V_r$ and $V_{r'}$, equipped with their respective invariant symplectic forms $B_r$ and $B_{r'}$.
	They each decompose into weight spaces and we define
	\begin{align*}
		T:=u_r &= \bigoplus_i (-1)^i \id_{V_{r}(r-2i)} \in \End(V_r)\\
		T':=u_{r'} &= \bigoplus_{i'} (-1)^{i'} \id_{V_{r'}(r-2i')} \in \End(V_{r'})	
	.\end{align*}
	
	Equipping $V\otimes V'$ with the split inner product $B_r\otimes B_{r'}$, 
	\[\begin{aligned}
		\sigma(T\otimes T')
		 = \tau(T\otimes T')
		 &= (-1)^{(r+1)(r'+1)/4}
	\end{aligned}
	.\]
\end{enumerate}
\end{lemma}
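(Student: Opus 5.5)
For both parts the plan is to use the weight-space decomposition to split the space into a $B$-orthogonal sum of small nondegenerate blocks. On each block the operator ($T$ or $T\otimes T'$) acts by a scalar $\pm1$. Proposition~\ref{propno:tausigma}(1) then reduces $\sigma$ and $\tau$ to a product of block contributions. The key elementary fact is this: on a hyperbolic plane (signature $(1,1)$), the scalar $-1$ reverses both a positive line and a negative line, so it has $\sigma=\tau=-1$. On a positive line, $+1$ contributes trivially.

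\emph{Part 1.} Write the weight $r-2i=2(m-i)=2k$, so that $T$ acts on $V_r(2k)$ by $(-1)^{k}$. By the proof of Proposition~\ref{propno:InvariantProductssl2modules}, $V_{2m}$ is the orthogonal sum of the hyperbolic planes $V_r(2k)\oplus V_r(-2k)$ for $1\leqslant k\leqslant m$ and of the positive line $V_r(0)$. Since $(-1)^{-k}=(-1)^k$, $T$ acts by the scalar $(-1)^k$ on each plane, and by $+1$ on $V_r(0)$. Hence
\[
\sigma(T)=\tau(T)=(-1)^{\#\{1\leqslant k\leqslant m,\ k \text{ odd}\}}.
\]
The number of odd $k$ in $\{1,\dots,m\}$ is $m/2$ if $m$ is even and $(m+1)/2$ if $m$ is odd, which gives the claim.

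\emph{Part 2.} The space $V_r\otimes V_{r'}$ is the sum of the lines $L_{a,b}=V_r(a)\otimes V_{r'}(b)$ with $a,b$ odd. The form $B_r\otimes B_{r'}$ pairs $L_{a,b}$ only with $L_{-a,-b}$, which is a different line because $a\neq0$. The pairing is nonzero, since the form is nondegenerate and all other pairings vanish. Each line is isotropic, because $(a,b)\neq(-a,-b)$. So $P_{a,b}=L_{a,b}\oplus L_{-a,-b}$ is a hyperbolic plane, and the whole space is the orthogonal sum of these $(r+1)(r'+1)/2$ planes.

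Next I check that $T\otimes T'$ is a scalar on each plane. With $i=(r-a)/2$ and $i'=(r'-b)/2$, the opposite line has indices $r-i$ and $r'-i'$. Then $(r-i)+(r'-i')\equiv i+i' \pmod 2$, because $r+r'$ is even. So $T\otimes T'$ acts on $P_{a,b}$ by $(-1)^{i+i'}$, and
\[
\sigma(T\otimes T')=\tau(T\otimes T')=\prod_{\text{planes}}(-1)^{i+i'}.
\]
To evaluate this, I take as representatives the lines with $a>0$, i.e.\ $0\leqslant i\leqslant (r-1)/2$ and $0\leqslant i'\leqslant r'$. The product then equals
\[
(-1)^{(r'+1)\sum_i i\;+\;\frac{r+1}{2}\sum_{i'} i'} .
\]
The first exponent is even since $r'+1$ is even. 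The second is $\frac{r+1}{2}\cdot\frac{r'(r'+1)}{2}$, which has the parity of $\frac{r+1}{2}\cdot\frac{r'+1}{2}=(r+1)(r'+1)/4$ because $r'$ is odd.

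The main obstacle is this final parity bookkeeping, together with checking that the representatives ($a>0$) meet each plane exactly once. I would verify both on small cases, e.g.\ $r=r'=1$, where there are $2$ planes and the result should be $-1$.
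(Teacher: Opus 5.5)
Your proof is correct and follows essentially the same route as the paper. The paper also splits $V_{2m}$, resp.\ $V_r\otimes V_{r'}$, into the $B$-orthogonal hyperbolic planes formed by opposite weight lines, on which $T$, resp.\ $T\otimes T'$, acts by a scalar, and then counts signs. The paper writes out explicit positive and negative vectors such as $e_{2k}\pm e_{-2k}$ and splits into cases according to the parities of $(r+1)/2$ and $(r'+1)/2$, whereas you use Proposition~\ref{propno:tausigma}(1) to multiply the plane-by-plane contributions and close the parity count with a single sum.
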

\begin{proof}
	~\begin{enumerate}
	\item For every $k\in 2\llbracket -m, m \rrbracket$, let $e_{k}$ be a vector spanning $V_{r}(k)$, with the furthermore requirement that $B(e_k, e_{-k})=1$. Then 
	\begin{align*}
		\forall i \in \llbracket 0, m - 1 \rrbracket, \;\;
			& T(e_{2m-2i} \pm e_{2i-2m}) = (-1)^{m-i} (e_{2m-2i} \pm e_{2i-2m})
	\end{align*}
	and $T(e_0)=e_0$.
	Furthermore, 
	\begin{itemize}
	\item 	$\Vect(e_0, e_2 + e_{-2}, \: \dots \: , e_{2m} + e_{-2m})$
			 is a maximal positive subspace,
	\item	$\Vect(e_2 - e_{-2}, \: \dots \: , e_{2m} - e_{-2m})$
			 is a maximal negative subspace.
	\end{itemize}
	
	Counting signs yields (note that $\det(T)=1$):
	when $m$ is even, 
	\(
		\sigma(T) = \tau(T) = (-1)^{m/2}
	\) 
    and
	when $m$ is odd, 
	\(
		\sigma(T) = \tau(T) = (-1)^{(m+1)/{2}}
	\).
    
	\item Write $r= 2m+1$ and $r' = 2m'+1$. The set of odd relative integers between $-r$ and $r$ is $1 + 2\llbracket -(m+1), m \rrbracket$. For $k \in 1 + 2\llbracket -(m+1), m \rrbracket$, let $e_k \in V_r(k)$ such that $B_r(e_k, e_{-k}) = 1$ when $k \geqslant 0$. Similarly, for $k' \in 1 + 2\llbracket -(m'+1), m' \rrbracket$, let $f_{k'} \in V_{r'}(k')$ such that $B_{r'}(f_{k'}, f_{-k'})=1$ when $k'\geqslant 0$.
	A basis of $V_r\otimes V_{r'}$ orthogonal for $B_r\otimes B_{r'}$ is given by the following family:
	\begin{equation}\label{eqno:VV'Basis}
	\begin{multlined}
		\Big(
			 v_k\otimes f_{k'} + v_{-k} \otimes f_{-k'}, \quad
			 v_k\otimes f_{k'} - v_{-k} \otimes f_{-k'}, \;\\
			 v_k\otimes f_{-k'} + v_{-k} \otimes f_{k'}, \quad 
			 v_k\otimes f_{-k'} - v_{-k} \otimes f_{k'}				
		\Big)_{
		\substack{
			k \in 1 + 2\llbracket 0, m \rrbracket\\
			k'\in 1 + 2\llbracket 0, m' \rrbracket}
		}
	\end{multlined}\end{equation}
	More precisely, for every
	$k \in 1 + 2\llbracket 0, m \rrbracket, \, 
	k' \in 1 + 2\llbracket 0, m' \rrbracket$,
    the basis vectors are normed as follows:
	\begin{equation*}
	\begin{aligned}
		B_r\otimes B_{r'}(
		v_k\otimes f_{k'} \pm v_{-k} \otimes f_{-k'}, \quad
		v_k\otimes f_{k'} \pm v_{-k} \otimes f_{-k'}
		)
		= \pm 2\\
		B_r\otimes B_{r'}(
		v_k\otimes f_{-k'} \pm v_{-k} \otimes f_{k'}, \quad
		v_k\otimes f_{-k'} \pm v_{-k} \otimes f_{k'}
		)
		= \mp 2
	\end{aligned}
	\end{equation*}
	with a single, coherent choice of sign $\pm$ for each line.
	Let us define 
	\[
		V_{\pm_1 \pm_2} := \Vect(v_k\otimes f_{\pm_1 k'} \, \pm_2 v_{-k} \otimes f_{\mp_1 k'}
		)_{\substack{k \in 1 + 2\llbracket 0, m \rrbracket\\
					k'\in 1 + 2\llbracket 0, m' \rrbracket}}	
	\]
	so that they are pairwise orthogonal, all have dimension $(m+1)(m'+1)$, $V_{++}$ and $V_{--}$ are positive subspaces and $V_{+-}$ and $V_{-+}$ are negative subspaces.
	
	The action of $T\otimes T'$ is diagonal in the basis~\eqref{eqno:VV'Basis} and takes the following form (recall that $k$ and $k'$ are odd):
	\[
	T\otimes T': \quad
	\begin{cases}
				 v_k\otimes f_{k'} \pm v_{-k} \otimes f_{-k'}
				 &\mapsto (-1)^{(r+r'+k+k')/2}\left(
				 	v_k\otimes f_{k'} \pm v_{-k} \otimes f_{-k'} \right)
				 \\
				 v_k\otimes f_{-k'} \pm v_{-k} \otimes f_{k'}				
				 &\mapsto (-1)^{(r+r'+k-k')/2}\left(
				 	v_k\otimes f_{-k'} \pm v_{-k} \otimes f_{k'} \right)
	\end{cases}
	\quad. \]	
	\end{enumerate}
	
	Counting the signs, we obtain:
	\begin{itemize}
	\item If either $m$ or $m'$ is odd, in each of the four subspaces $V_{\pm\pm}$, the eigenvalues $1$ and $-1$ have the same multiplicity and
	\[\begin{aligned}
		\det\nolimits_{V_{\pm_1\pm_2}}(T\otimes T') &= (-1)^{(m+1)(m'+1)/2}					
	\end{aligned}\]
	thus
	\[
		\sigma(T\otimes T') = \tau(T\otimes T') = 1 = (-1)^{(m+1)(m'+1)}
	.\]
	
	\item If both $m$ and $m'$ are even then
	\[\begin{aligned}
		\det\nolimits_{V_{+ \pm}}(T\otimes T')
		&= (-1)^{1+(r+r')/2}(-1)^{((m+1)(m'+1)-1)/2} \\					
		\det\nolimits_{V_{- \pm}}(T\otimes T')
		&= (-1)^{(r+r')/2}(-1)^{((m+1)(m'+1)-1)/2} \\					
	\end{aligned}\]
	and
	\[
		\sigma(T\otimes T') = \tau(T \otimes T') = -1 = (-1)^{(m+1)(m'+1)}
	.\]	
	\end{itemize}
\end{proof}

We now have all the results needed to know when $x$ and $-x$ belong to the same adjoint orbit.
\begin{theorem}
	Let $(x,y,h)$ be a standard triple in $\so(p,q, \setR)$.
	For $r\geqslant 1$, define $n_r = \dim W_r$. 
	Then
    $x$ and $-x$ belong to different $\SOp(p,q,\setR)$-orbits if and only if     
        \[
        \sum_{k\geqslant 0} n_{8k+4} + n_{8k+2} + \frac{n_{4k+1}}2 
        \equiv 1
        \mod 2
        \]
    and either
        \begin{equation} \label{eqno:SOpqCond1}
            q_{4k} = p_{4k+2} = 0 \qquad  \forall k \in \setN
        \end{equation}
		or
        \begin{equation}\label{eqno:SOpqCond2}
             p_{4k} = q_{4k+2} = 0\qquad \forall k \in \setN
        .\end{equation}

    When $x$ and $-x$ are $\SOp(p,q,\setR)$-conjugate, the nilpotent orbit of $x$ admits no Gibbs states.
\end{theorem}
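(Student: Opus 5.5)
We first construct one explicit element $g_0\in\O(p,q,\setR)$ conjugating $x$ to $-x$, and then use Lemma~\ref{lmno:SOpqConnectedComponents} to decide whether $x$ and $-x$ are $\SOp(p,q,\setR)$-conjugate. By that lemma, this happens exactly when the connected component of $g_0$, i.e.\ the pair $(\sigma(g_0),\tau(g_0))$, lies in the image $(\sigma,\tau)\big(Z_{\O(p,q,\setR)}(x)\big)$ computed in Lemma~\ref{lmno:SigmaTauStabx}.

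\textbf{Step 1: the element $g_0$.} Following the proof of Theorem~\ref{thmno:IsometryConjugacy}, we take
\[
g_0=\bigoplus_r T_r\otimes\psi_r .
\]
For even $r$ we set $T_r=\tilde u_r$ and $\psi_r=\id_{W_r}$; this is an isometry because $\tilde u_r^*B_r=B_r$. For odd $r$ we have $u_r^*B_r=-B_r$, so we set $T_r=u_r$ and take for $\psi_r$ an anti-symplectic map of $(W_r,\phi_r)$, i.e.\ $\psi_r^*\phi_r=-\phi_r$. Concretely, in a Darboux basis $(a_j,b_j)$ we choose $\psi_r(a_j)=a_j$ and $\psi_r(b_j)=-b_j$. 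Then $g_0$ preserves $B$ and conjugates $(x,y,h)$ to $(-x,-y,h)$. By Proposition~\ref{propno:tausigma}(1), $\sigma(g_0)$ and $\tau(g_0)$ are products of the contributions of the blocks.

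\textbf{Step 2: computing $\sigma$ and $\tau$ blockwise.} For even $r=2m$, Proposition~\ref{propno:tausigma}(2) applied to $\tilde u_{2m}\otimes\id_{W_{2m}}$, viewed as $\id\otimes\tilde u_{2m}$ after swapping factors, gives
\[
\sigma=\sigma(\tilde u_{2m})^{p_{2m}}\,\tau(\tilde u_{2m})^{q_{2m}}
\]
and the analogous formula for $\tau$. By Lemma~\ref{lmno:SigmaTauMinusx}(1) we have $\sigma(\tilde u_{2m})=\tau(\tilde u_{2m})$, so both values equal $\epsilon_m^{\,n_{2m}}$. Here $\epsilon_m=-1$ exactly when $m\equiv 1,2 \pmod 4$, that is, when $r\equiv 2,4\pmod 8$. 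This accounts for the terms $n_{8k+2}+n_{8k+4}$.

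For odd $r$, the symplectic space $(W_r,\phi_r)$ is isometric to $V_1\otimes\setR^{n_r/2}$ with $\setR^{n_r/2}$ carrying a positive definite form. Under this identification $\psi_r$ can be taken to be $u_1\otimes\id$. The block $T_r\otimes\psi_r$ is then $n_r/2$ copies of $u_r\otimes u_1$. By Lemma~\ref{lmno:SigmaTauMinusx}(2) with $r'=1$, each copy has $\sigma=\tau=(-1)^{(r+1)/2}$, which equals $-1$ exactly when $r\equiv1\pmod 4$. Multiplying everything, we get
\[
\sigma(g_0)=\tau(g_0)=(-1)^{S},\qquad S=\sum_k n_{8k+2}+n_{8k+4}+\tfrac{n_{4k+1}}{2}.
\]
The identification of $\psi_r$ with $u_1\otimes\id$ is the step I expect to need the most care. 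One must check that the isometry $W_r\simeq V_1\otimes\setR^{n_r/2}$ can be chosen so that $\psi_r$ matches $u_1\otimes\id$. Failing that, one verifies directly that the component of an anti-symplectic $\psi_r$ is well defined: any two anti-symplectic maps differ by an element of the connected group $\Symp(W_r)$.

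\textbf{Step 3: conclusion.} If $S$ is even, then $g_0\in\SOp(p,q,\setR)$ and the orbit is stable under negation. If $S$ is odd, $g_0$ lies in the component $(-1,-1)$. By Lemma~\ref{lmno:SigmaTauStabx}, this component meets $Z_{\O(p,q,\setR)}(x)$ exactly in case 4, i.e.\ when neither \eqref{eqno:SOpqCond1} nor \eqref{eqno:SOpqCond2} holds. Cases 1 to 3 are precisely those where one of the two conditions holds (case 1 satisfies both). So $x$ and $-x$ lie in different $\SOp(p,q,\setR)$-orbits if and only if $S$ is odd and \eqref{eqno:SOpqCond1} or \eqref{eqno:SOpqCond2} holds, which is the claimed criterion. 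The statement on Gibbs states then follows from Proposition~\ref{propno:NoGenTemp}.
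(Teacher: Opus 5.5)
Your proof is correct and follows the paper's argument. Like the paper, you build an explicit element of $\O(p,q,\setR)$ that conjugates $x$ to $-x$ out of $\tilde u_r$ (even $r$) and $u_r$ tensored with an anti-symplectic map (odd $r$), compute its $(\sigma,\tau)$ blockwise via Lemma~\ref{lmno:SigmaTauMinusx}, and compare the result with the components met by $Z_{\O(p,q,\setR)}(x)$ from Lemma~\ref{lmno:SigmaTauStabx}. The only difference is cosmetic: you write each odd block as $n_r/2$ copies of $u_r\otimes u_1$, so you only need Lemma~\ref{lmno:SigmaTauMinusx}(2) with $r'=1$, whereas the paper uses a single $u_r\otimes u_{n_r-1}$; both give the sign $(-1)^{(r+1)n_r/4}$. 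The step you flagged needs no extra care, because you are free to choose the identification $W_r\simeq V_1\otimes\setR^{n_r/2}$ so that your Darboux-basis map $\psi_r$ is exactly $u_1\otimes\id$.
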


\begin{proof}
	Let us construct an element of $\O(p,q,\setR)$ that conjugates $x$ and $-x$. Let 
	\[
		V = \bigoplus_r V_r \otimes W_r
	\]
	be the isotypic decomposition associated with the standard triple. 
	For even $r$, define $A_r = T_r\otimes \id_{W_r}$ with $T_r$ as defined in Lemma~\ref{lmno:SigmaTauMinusx}. 
	For odd $r$, $W_r$ is endowed with a symplectic product $\phi_i$ and it is symplectically isometric to $V_{n_r - 1}$ (disregarding the action of $\sl(2, \setR)$). We thus define $T'_r \in \O(W_r, \phi_r)$ that is symplectically equivalent to $T_{n_r - 1}$ on $V_{n_r - 1}$, and, in particular, $(T'_r)^*\phi_r = -\phi_r$. Therefore $(T_r\otimes T'_r)^* B_r\otimes \phi_r = B_r \otimes \phi_r$ and we define $A_r = T_r\otimes T'_r$. 
	
	Finally, we gather these maps into
	\[
		A = \bigoplus_r A_r
        \in \O \left( \bigoplus V_r\otimes W_r \right)
		.
	\]
	By construction, $A\in \O(p,q,\setR)$ and $AxA^{-1} = -x$ (see Proposition~\ref{propno:x->-x}).
	We simply need to compute the connected component of $\O(p,q,\setR)$ to which $A$ belongs, using Lemma~\ref{lmno:SigmaTauMinusx}, and compare it to the connected components that intersect the centralizer of $x$ in $\O(p,q,\setR)$, given by Lemma~\ref{lmno:SigmaTauStabx}.

	First, when neither condition~\eqref{eqno:SOpqCond1} or~\eqref{eqno:SOpqCond2} holds, $\SOp(p,q, \setR)$ acts transitively on the $\O(p,q,\setR)$-orbit according to Lemma~\ref{lmno:SigmaTauStabx}, so that $x$ and $-x$ are $\SOp(p,q,\setR)$-conjugate. 
    
	We now assume that at least one of the conditions~(\ref{eqno:SOpqCond1},\ref{eqno:SOpqCond2}) holds. Since all $A_r$ have determinant $1$, $\det(A) =1$. Lemma~\ref{lmno:SigmaTauStabx} implies that $Z_{\SO(p,q)}(x) = Z_{\SOp(p,q, \setR)}(x)$, thus $x$ is $\SOp(p,q, \setR)$-conjugate to $-x$ if and only if $A \in \SOp(p,q, \setR)$. It is therefore a matter of computing $\sigma(A)$. We treat separately the various remainders of $r$ modulo $4$:
	\begin{itemize}
	\item $r=4k$: we know that $\sigma(T_r) = \tau(T_r) = (-1)^k$ therefore 
	$\sigma(A_r) = (-1)^{kn_{4k}}$.

	\item $r=4k+2$: we know that $\sigma(T_r) = \tau(T_r) = (-1)^{k+1}$ therefore 
	$\sigma(A_r) = (-1)^{(k+1)n_{4k+2}}$.

	\item $r=2m+1$: we know that $\sigma(A_r)=\sigma(T_r\otimes T_{n_r -1}) = (-1)^{(m+1)n_{2m+1}/2}$.
	\end{itemize}
	
	We conclude that $A\in \SOp(p,q, \setR)$ if and only $\sigma(A)=1$, namely 
	\[\begin{aligned}
		0&
        \equiv
        \sum_{k\geqslant 0}
			 k n_{4k}
			 + (k+1)n_{4k+2}
		+ \sum_{m\geqslant 0} (m+1) \frac{n_{2m+1}}2
		\mod 2
        \\
        &\equiv
        \sum_{k'\geqslant 0}
			 (2k'+1) n_{8k'+4}
        + \sum_{k'\geqslant 0}
			 (2k'+1)n_{8k'+2}
		+ \sum_{m'\geqslant 0} (2m'+1) \frac{n_{4m'+1}}2
		\mod 2
        \\
		&\equiv 
        \sum_{k\geqslant 0} n_{8k+4} + n_{8k+2} + \frac{n_{4k+1}}2 
		\mod 2
	.
    \end{aligned}\]
\end{proof}

The results are summed up in Table~\ref{tableno:Conditions}.

\begin{table}[h!]
\caption{Conditions for $\Orb = - \Orb$.}
\[\begin{array}{|c|c|}
	\hline
	\sl(n, \setR)
	 & 
	 \exists r \not\equiv 1 \mod 4, \, \; n_r \neq 0
	\\	\hline
	\so(p,q, \setR)
	&
	\begin{array}{c}
		\big[\left(
			\exists k\in \setN, q_{4k} \neq 0 \text{ or } p_{4k+2}, \neq 0
		\right)\\
		\text{ and }
		\left(
			\exists k\in \setN, p_{4k} \neq 0 \text{ or } q_{4k+2}, \neq 0
		\right)
		\big]
		\text{ or }\\
		 \sum_{k \geqslant 0} n_{8k+2} + n_{8k+4} + n_{4k+1}/2 \equiv 0 \mod 2
	\end{array}
  	\rule{0pt}{5.6ex} 
	\\	\hline
	\sl(n, \setH)
	&
	\textit{always}
	\\	\hline
	\symp(p,q)
	&
	\textit{always}
	\\	\hline
	\symp(2n, \setR), 	\su(p,q), \so^*(2n)
	&
	\forall m, \; p_{2m+1} = q_{2m+1}
	\\	\hline
	\text{complex Lie algebras}
	&
	\textit{always}
	\\ \hline
\end{array}\]
\label{tableno:Conditions}
\end{table}

\section{Conclusion}

We have proved that in many cases, nilpotent adjoint orbits of simple real Lie groups are stable under negation. 
For complex Lie algebra, symplectic Lie algebras and quaternionic special linear algebra, the orbits are always stable under negation.
For the real symplectic, unitary and quaternion unitary groups, the orbits are stable under negation if and only if every multiplicity space of odd weight has split signature.
For real special linear algebras and orthogonal Lie algebras, the criterion has to do with the existence of certain multiplicity spaces, possibly with appropriate signature.
\printbibliography

\end{document}